\numberwithin{equation}{section}
\tikzstyle{vertex}=[circle,draw, inner sep=0pt, minimum size=1.5pt]
\newtheorem{theorem}{Theorem}[section]
\newtheorem{lemma}[theorem]{Lemma}
\newtheorem{proposition}[theorem]{Proposition}
\newtheorem{question}[theorem]{Question}
\newtheorem{corollary}[theorem]{Corollary}
\theoremstyle{definition}
\newtheorem{notation}[theorem]{Notation}
\newtheorem{example}[theorem]{Example}
\newtheorem{obs}[theorem]{Observation}
\theoremstyle{remark}
\newtheorem{remark}[theorem]{Remark}
\newcommand{\pd}{\operatorname{pd}}
\newcommand{\minh}{\operatorname{Minh}}
\newcommand{\supp}{\operatorname{supp}}
\newcommand{\ass}{\operatorname{Ass}}
\newcommand{\hgt}{\operatorname{ht}}
\newcommand{\Tor}{\mathrm{Tor}}
\newcommand{\reg}{\mathrm{reg}}
\begin{document}

	\title[Symbolic powers of certain cover ideals]{Symbolic powers of certain cover ideals of graphs}
	\author[Arvind Kumar]{Arvind Kumar}
	\email{arvkumar11@gmail.com}
	\address{Department of Mathematics, Indian Institute of Technology
        Madras, Chennai, INDIA - 600036}
	\author[R. Kumar]{Rajiv Kumar}
	\email{gargrajiv00@gmail.com }
	\address{Department of Mathematics
The LNM INSTITUTE OF Information Technology Jaipur, INDIA-302031}
	\author[Rajib Sarkar]{Rajib Sarkar}
	\email{rajib.sarkar63@gmail.com}
	\address{Department of Mathematics, Indian Institute of Technology
        Madras, Chennai, INDIA - 600036}
	\author[S. Selvaraja]{S Selvaraja}
	\email{selva.y2s@gmail.com, selvaraja@imsc.res.in}
	\address{The Institute of Mathematical Sciences, CIT campus, Taramani, Chennai,
INDIA - 600113}
	
	\begin{abstract}
		In this paper, 
		 we compute the regularity and Hilbert series of symbolic powers of cover ideal of a graph $G$ when $G$ is  either a crown graph or a complete multipartite graph.
		We also compute the multiplicity of symbolic powers of cover ideals in terms of 
the number of edges.
	\end{abstract}
	
	\thanks{AMS Classification 2010: 13D02, 13F20}
\keywords{complete multipartite graph, cover ideal, crown graph, Hilbert series, 
multiplicity, regularity, symbolic power}

	\maketitle
\section{Introduction}
Symbolic powers of ideals have been studied intensely over the last two decades. 
We refer the reader to \cite{DDAGHN} for a review in this direction. 
There are many ideals  associated to graphs, for example edge ideals and cover ideals.
Let $G$ denote a finite simple (no loops, no multiple edges) undirected graph with the vertex set 
$V(G) =\{x_1,\ldots,x_n\}$ and edge set $E(G)$. For a graph $G$, by identifying the vertices with variables in 
$S=\mathbb{K}[x_1,\ldots,x_n]$, where $\mathbb{K}$ is a field, we associate squarefree monomial ideals, {\it edge ideal} $I(G)=\left(x_ix_j \mid \{x_i,x_j\} \in  E(G) \right) $ and {\it cover ideal} 
$J(G)= \left( \prod_{x \in w}x \mid w \text{ is a minimal vertex cover of } G\right)$.  By \cite[Proposition 2.7]{FHM}, $I(G)$ and $J(G)$ are dual to each other.
Recently, building a dictionary between combinatorial data of graphs and the algebraic properties of  corresponding ideals has 
been done by various authors
(cf. \cite{FHM}, \cite{HT19}, \cite{HTrung}, \cite{PhDT103J}, \cite{MMV19}, \cite{Seyed16}, \cite{Fakhari17}, \cite{Seyed},  \cite{villarreal_book}). 
In particular, establishing a relationship between Castelnuovo-Mumford regularity (or simply, regularity) 
of powers of ideals, Hilbert series of ideals and combinatorial invariants associated with graphs is an active area of 
research (cf. \cite{selvi_ha}, \cite{Good2013}, \cite{jayanthan}).

It was proved by Cutkosky,
Herzog and Trung \cite{CHT}, and independently Kodiyalam
\cite{vijay}, that if $I$ is a homogeneous ideal in $\mathbb{K}[x_1,
\ldots, x_n]$, then there exist non-negative integers $a, b$ and
$s_0$ such that $\reg(I^s) = as + b$ for all $s \geq s_0$. 
While the
coefficient $a$ is well-understood (cf. \cite{CHT}, \cite{Gu17}, \cite{vijay}, \cite{TW}), the free constant $b$ and  
stablization index $s_0=\min\{s \mid \reg(I^t)=at+b \text{ for } t\geq s\}$ are quite mysterious. In the case of symbolic powers,
Minh and Trung \cite{MT17}, ask the following question.
\begin{question}
	Let $I$ be a squarefree monomial ideal. Is $\reg(I^{(s)})$ a linear function for $s \gg 0$?
\end{question}
In \cite{HHT07},
Herzog, Hibi and Trung proved that, if $I$ is a monomial ideal, then $\reg(I^{(s)})$ is a quasi-linear
function for $s \gg 0$. For small dimension, more general results are
known in \cite{HHT02} and \cite{HT10}.
It is not known whether the regularity of symbolic powers of
squarefree monomial ideals is a linear function or not. In this article, 
we determine the  linear polynomial for the regularity of symbolic powers of certain cover ideals of graphs.

A crown graph $C_{n,n}$ is a graph obtained  from $K_{n.n}$ by
removing a perfect matching (see definition in Section \ref{pre}). 
The Betti numbers of edge ideal and representation number of crown graphs have been looked
by several authors \cite{GKP-crown}, \cite{Rather2018}.  %Also this class of graphs has been studied 
Since crown graph is a bipartite graph, by \cite[Corollary 2.6]{GRV05},
$J(G)^s=J(G)^{(s)}$ for all $s \geq 1$. 
In \cite{HTrung}, Hang and Trung proved that if $G$ is bipartite, then 
$b \leq |V(G)|-\deg(J(G))-1$ and $s_0 \leq |V(G)|+2$, where $\deg(J(G))=\max\{|C| : C \text{ is a minimal vertex cover  of }
G\}.$ In the case of crown graph we obtain that $b=0$ and $s_0=1$ (Theorem \ref{basecase}).

We then consider  complete multipartite graphs. The resolution of powers of cover ideals of
complete multipartite graphs and vanishing ideal of the parametrized algebraic toric set associated to 
complete multipartite graphs have already been studied by several authors \cite{JayanNeeraj}, \cite{RajivAjay}, 
\cite{RajivAjay18}, \cite{NV14}. 
We prove that, if $G$ is complete multipartite with partition
$V(G)=V_1 \cup \cdots \cup V_k$, then  $\reg(J(G)^{(s)})=s  \deg(J(G))+p-1$ for all $s \geq 1$, where 
$p=\min\{p_i : p_i=|V_i|\}$ (Theorem \ref{powerMultiThm}).

The Hilbert function, 
Hilbert series and Hilbert polynomial are important invariants in commutative algebra and algebraic geometry
that measure the growth of the dimension of its homogeneous components.
In general, computing   the Hilbert series  of $S/I$ is a 
difficult task when $I$ is a monomial ideal \cite{Bh1993}. 
In \cite{Good2013}, Goodarzi computed the Hilbert series of squarefree monomial ideals.
We compute the Hilbert series of symbolic powers of cover ideals of crown and complete multipartite graphs (Theorem 
\ref{Hil-crown}, Theorem \ref{C6.8}).

Computing and finding bounds for the multiplicity of homogeneous ideals have been studied by a 
number of researchers (see \cite{Bh1993}, \cite{Herzog'sBook}, \cite{villarreal_book}).
We compute the multiplicity of symbolic powers of cover ideals and edge ideals
in terms of combinatorial
invariants (Corollary \ref{multi-coverideal}).

In order to prove our main results, we first show that the minimal monomial generators of
symbolic powers of cover ideals
have specific order that satisfies some nice properties (Lemma \ref{techlemma}, Lemma \ref{C6.4}).
Using this ordering and certain exact sequences, we obtain  main results.

Our paper is organized as follows. 
In Section \ref{pre}, we 
collect the necessary notion, terminology and some results that are used in  rest of the article.
 The regularity and Hilbert series of symbolic powers of cover ideals 
 of crown and multipartite graphs are discussed in Sections \ref{crown} and \ref{complete}, respectively.
 The multiplicity of symbolic powers of edge ideals and cover ideals is 
 studied in Section \ref{multiplicity}.
 
 \section{Preliminaries}\label{pre}
In this section, we set up basic definitions, notation and some important results which are needed
for  rest of the paper. 

\subsection{Notion from commutative algebra} 
Let $M =\underset{k \in \mathbb{N}} \bigoplus M_k$ be a finite graded $S$-module.
The \emph{Hilbert series} of $M$, denoted by $H(M,t)$, is defined as 
$H({M},t):= \underset{k \in \mathbb{N}} \sum \dim_{\mathbb{K}}(M_k) t^k$. By \cite[Proposition 4.4.1]{Bh1993}, there exists a polynomial $h_M(t)\in \mathbb{Z}[t]$ such that $H(M,t)=\dfrac{h_M(t)}{(1-t)^d}$, where $d$ is the dimension of $M$. 
The \emph{multiplicity} of $M$, denoted by $e(M)$, is defined as $e(M)=h_M(1)$.
The 
\emph{Castelnuovo-Mumford regularity} of $M$, denoted by $\reg(M)$, is defined as 
$\reg(M)=\max \{j-i \mid \Tor_i^S(M,\mathbb{K})_j \neq 0\}$.

Let $I$ be an ideal in a Noetherian domain $R$. The $s$-th \emph{symbolic power} of $I$ is defined by
$I^{(s)}:= \bigcap\limits_{\mathfrak{p} \in \ass(R/I)}  (I^sR_{\mathfrak{p}} \cap R).$
It follows from \cite[Proposition 1.4.4]{Herzog'sBook} that if $I$ is a squarefree monomial ideal in $S$, then $s$-th 
symbolic power of $I$ is
$
I^{(s)}= \bigcap_{\mathfrak{p}\in \ass(S/I)} \mathfrak{p}^s.
$

\begin{remark}\label{symbCon} Let $\mathfrak{p}=(x_{i_1},\dots, x_{i_r})$. For a monomial $u$ in $S$, set $m_i(u)=\max\{j:x_i^j\mid u\}$ and $\deg_{\mathfrak{p}}(u)=\sum\limits_{k=1}^rm_{i_k}(u)$.  
	Let $I$ be a squarefree monomial ideal with $I=\bigcap\limits_{\mathfrak{p}\in \ass(S/I)} \mathfrak{p}$. Then  $u\in I^{(s)}$ if and only if $\deg_{\mathfrak{p}}(u)\geq s$ for all $\mathfrak{p} \in$ Ass$(S/I)$. 
\end{remark}

\subsection{Notion from combinatorics}
Let $G$ be a finite simple graph with  the vertex set $V(G)$
and edge set $E(G)$. 
A subset
$X$ of $V(G)$ is called \textit{independent} if for all $x,y\in X$,  $\{x,y\} \notin E(G)$. A graph $G$ is said to be \emph{bipartite} if there exist two disjoint independent sets $X$ and $Y$ such that $V(G)=X\cup Y$.
A graph $G$ is said to be \emph{complete multipartite} if $V(G)$ can be partitioned 
into sets $V_1,\ldots,V_k$ for some $k \geq 2$ such that
$E(G)=\bigcup_{i \neq j}\left\{ \{x,y\}\mid x\in V_i,y\in V_j\right\}$
and it is denoted by $K_{p_1,\dots, p_k}$, where $p_i=|V_i|$.
An $n$-\emph{crown graph} (or simply a \emph{crown graph}), denoted by $C_{n,n}$,  
is a bipartite graph on the vertex set $V(G)=\{x_1,\ldots,x_n,y_1,\ldots,y_n\}$ with  edge set 
$E(G)=\Big\{\{x_i,y_j\} \mid 1 \leq i,j\leq n, i \neq j \Big\}.$
A subset $C \subset V(G)$ is a \emph{vertex cover}  of
$G$ if for each $e \in E(G)$, $e\cap C \neq \emptyset$.  If $C$ is minimal
with respect to inclusion, then $C$ is called  a \textit{minimal vertex
cover} of $G$.
\begin{example} 
Let $G=K_{2,2,1,1}$ and $H=C_{4,4}$ be complete multipartite graph 
and crown graph on $\{x_{1,1},x_{1,2},x_{2,1},x_{2,2},x_{3,1}
,x_{4,1}\}$ and $\{x_1,\ldots,x_4,y_1,\ldots,y_4\}$ as given in the figure below.
 
 \begin{minipage}{\linewidth}
  %\centering
\begin{minipage}{0.45\linewidth}
%\captionsetup[figure]{labelformat=empty}
\begin{figure}[H]
\definecolor{ududff}{rgb}{0.30196078431372547,0.30196078431372547,1}
\begin{tikzpicture}[scale=0.75]
%	\clip(-6.96,-7.31) rectangle (6.96,7.31);
\draw [line width=1pt](1,2)-- (2.42,0.99);
\draw [line width=1pt] (1,3)-- (4,3);
\draw [line width=1pt] (4,3)-- (2.42,0.99);
\draw [line width=1pt] (4,2)-- (2.44,3.97);
\draw [line width=1pt] (2.44,3.97)-- (1,2);
\draw [line width=1pt] (4,2)-- (1,3);
\draw [line width=1pt] (1,2)-- (4,2);
\draw [line width=1pt] (1,2)-- (4,3);
\draw [line width=1pt] (1,3)-- (2.42,0.99);
\draw [line width=1pt] (1,3)-- (2.44,3.97);
\draw [line width=1pt] (4,2)-- (2.42,0.99);
\draw [line width=1pt] (4,3)-- (2.44,3.97);
\draw [line width=1pt] (2.44,3.97)-- (2.42,0.99);
\begin{scriptsize}
\draw [fill=black] (1,2) circle (1.5pt);
\draw[color=black] (1.08,1.6) node {$x_{1,2}$};
\draw [fill=black] (2.42,0.99) circle (1.5pt);
\draw[color=black] (2.58,.62) node {$x_{4,1}$};
\draw [fill=black] (1,3) circle (1.5pt);
\draw[color=black] (1.16,3.42) node {$x_{1,1}$};
\draw [fill=black] (4,3) circle (1.5pt);
\draw[color=black] (4.04,3.4) node {$x_{2,1}$};
\draw [fill=black] (4,2) circle (1.5pt);
\draw[color=black] (4.06,1.6) node {$x_{2,2}$};
\draw [fill=black] (2.44,3.97) circle (1.5pt);
\draw[color=black] (2.6,4.3) node {$x_{3,1}$};
\end{scriptsize}
\end{tikzpicture}	
\caption*{$G$}
\end{figure}
\end{minipage}
\begin{minipage}{0.6\linewidth}

\begin{figure}[H]
\definecolor{ududff}{rgb}{0.30196078431372547,0.30196078431372547,1}
\begin{tikzpicture}[scale=1.]
%\clip(-6.96,-7.31) rectangle (6.96,7.31);
\draw [line width=1pt] (1,4)-- (2,2);
\draw [line width=1pt] (2,2)-- (3,4);
\draw [line width=1pt] (3,4)-- (1,2);
\draw [line width=1pt] (1,2)-- (2,4);
\draw [line width=1pt] (2,4)-- (4,2);
\draw [line width=1pt] (4,2)-- (3,4);
\draw [line width=1pt] (3,2)-- (4,4);
\draw [line width=1pt] (4,4)-- (2,2);
\draw [line width=1pt] (3,2)-- (2,4);
\draw [line width=1pt] (1,4)-- (3,2);
\draw [line width=1pt] (1,2)-- (4,4);
\draw [line width=1pt] (4,2)-- (1,4);
\begin{scriptsize}
\draw [fill=black] (1,4) circle (1.5pt);
\draw[color=black] (1,4.3) node {$x_1$};
\draw [fill=black] (2,2) circle (1.5pt);
\draw[color=black] (2,1.75) node {$y_2$};
\draw [fill=black] (3,4) circle (1.5pt);
\draw[color=black] (3,4.3) node {$x_3$};
\draw [fill=black] (1,2) circle (1.5pt);
\draw[color=black] (1,1.75) node {$y_1$};
\draw [fill=black] (2,4) circle (1.5pt);
\draw[color=black] (2,4.3) node {$x_2$};
\draw [fill=black] (4,2) circle (1.5pt);
\draw[color=black] (4,1.75) node {$y_4$};
\draw [fill=black] (3,2) circle (1.5pt);
\draw[color=black] (3,1.75) node {$y_3$};
\draw [fill=black] (4,4) circle (1.5pt);
\draw[color=black] (4,4.3) node {$x_4$};
\end{scriptsize}
\end{tikzpicture}
\caption*{$H$}
\end{figure}
\end{minipage}
\end{minipage}

It can be noted that $\{x_{1,1},x_{1,2},x_{2,1},x_{2,2},x_{3,1}\}$ %$\{x_{2,1},x_{2,2},x_{3,1},x_{4,1}\}$, 
and 
%$\{x_1,x_2,x_3, x_4\}$, 
$\{x_2,x_3, x_4,y_2,y_3,y_4\}$ are minimal vertex covers of $K_{2,2,1,1}$ and $C_{4,4}$, respectively.
 
\end{example}
For any undefined terminology and further basic definitions, we refer the reader to \cite{Bh1993}, \cite{Herzog'sBook}.

\section{Crown graph}\label{crown}
In this section, we study the regularity and Hilbert series of symbolic powers of cover ideals of
crown graphs. Throughout this section, $G$ denotes a  crown graph.
\subsection{Regularity}
In this subsection, we obtain the linear function for the regularity of $J(G)^{(s)}$ for all 
$s \geq 1$. 
Our result Theorem \ref{basecase} shows that $\reg(J(G)^{(s)})$ is a 
linear function with the stabilization index $s_0=1$ and free constant $b=0$.
In order to prove this, we first fix certain notation.
\begin{notation} \label{setup}
	For $n\geq 3$, let $G=C_{n,n}$ be a graph with $V(G)=\{x_1,\ldots,x_n,y_1,\ldots,y_n\}$. Set 
	\begin{align*}
	M_x=\prod_{i=1}^n x_i, ~~  M_y=\prod_{i=1}^n y_i,~~
	M=M_xM_y \text{ and }M_i=\frac{M}{x_iy_i} \text{ for $1 \leq i \leq n$. }
	\end{align*}
\end{notation}
First, we find the monomial generating set of
cover ideal of crown graph.

\begin{lemma}
	Let $G=C_{n,n}$ with notation as in \ref{setup}. Then 
	$J(G)=(M_x,M_y,M_1,\ldots,M_n).$ 
	In particular, $\deg(J(G))=2n-2$.
\end{lemma}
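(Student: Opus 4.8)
The plan is to exploit the standard graph-theoretic duality that the minimal vertex covers of $G$ are exactly the complements (in $V(G)$) of the maximal independent sets of $G$; since by definition $J(G)$ is generated by the squarefree monomials attached to the minimal vertex covers, it suffices to classify the maximal independent sets of $C_{n,n}$ and then pass to complements. So first I would determine all independent sets, single out the maximal ones, and finally read off the generators and compute the degree.

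For the classification, the adjacency rule $\{x_i,y_j\}\in E(G)$ iff $i\neq j$ immediately shows that no two $x$-vertices and no two $y$-vertices are adjacent, so $\{x_1,\ldots,x_n\}$ and $\{y_1,\ldots,y_n\}$ are independent; each is maximal because adjoining any $y_j$ (resp.\ $x_i$) produces an edge $\{x_i,y_j\}$ with $i\neq j$, which exists since $n\geq 3$. The substantive step is the mixed case: suppose an independent set meets both sides, say it equals $\{x_i : i\in S\}\cup\{y_j : j\in T\}$ with $S,T$ nonempty. Independence forces $i=j$ for every $i\in S$ and $j\in T$; fixing some $j_0\in T$ and letting $i$ range over $S$ gives $S=\{j_0\}$, and symmetrically $T=\{j_0\}$, so the set is contained in some $\{x_k,y_k\}$. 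Conversely each $\{x_k,y_k\}$ is independent (the pair is a non-edge) and maximal, since adding any $x_m$ or $y_m$ with $m\neq k$ creates the edge $\{x_m,y_k\}$ or $\{x_k,y_m\}$.

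Collecting these, the maximal independent sets of $C_{n,n}$ are precisely $\{x_1,\ldots,x_n\}$, $\{y_1,\ldots,y_n\}$, and $\{x_k,y_k\}$ for $1\leq k\leq n$. Taking complements, the minimal vertex covers are $\{y_1,\ldots,y_n\}$, $\{x_1,\ldots,x_n\}$, and $V(G)\setminus\{x_k,y_k\}$, whose associated monomials are exactly $M_y$, $M_x$, and $M_k=M/(x_ky_k)$. Since maximal independent sets form an antichain, so do these covers, so these monomials are indeed the minimal generators, and hence $J(G)=(M_x,M_y,M_1,\ldots,M_n)$.

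Finally, $\deg(J(G))$ is the maximum cardinality of a minimal vertex cover, equivalently the largest degree among the generators: here $\deg M_x=\deg M_y=n$ while $\deg M_k=2n-2$, and $n\geq 3$ forces $2n-2>n$, so $\deg(J(G))=2n-2$. I do not expect a real obstacle here; the only step needing genuine care is the mixed-set argument, namely that an independent set touching both sides collapses to a single matched pair $\{x_k,y_k\}$, and this falls out directly from the adjacency rule.
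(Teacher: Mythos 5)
Your proof is correct, but it takes a genuinely different route from the paper's. The paper works algebraically with the primary decomposition $J(G)=\bigcap_{i\neq j}(x_i,y_j)$: it takes a monomial $u\in J(G)$ not divisible by $M_x$ or $M_y$, so that $x_i\nmid u$ and $y_j\nmid u$ for some $i,j$, and then uses $J(G)\subset(x_i,y_k)$ for $k\neq i$ to force $y_k\mid u$ for all $k\neq i$ (whence $j=i$) and, by symmetry, $x_k\mid u$ for all $k\neq i$, so $M_i\mid u$. You instead argue combinatorially, invoking the duality between minimal vertex covers and maximal independent sets and then classifying the maximal independent sets of $C_{n,n}$; your key ``mixed case'' argument (an independent set meeting both sides collapses to a matched pair $\{x_k,y_k\}$) is sound, and the resulting list of covers matches the paper's generators exactly. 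The paper's argument is self-contained at the level of monomial divisibility and dovetails with the colon-ideal computations used later in the section, while yours makes the combinatorial content more transparent and identifies the generators as a \emph{minimal} generating set (via the antichain observation), which the paper does not explicitly address. Both establish the degree claim the same way, by comparing $n$ with $2n-2$ for $n\geq 3$.
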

\begin{proof}
	Since $J(G)=\bigcap\limits_{i\neq j}(x_i,y_j)$, $(M_x,M_y,M_1,\ldots,M_n) \subset J(G)$. 
	Let $u$ be a monomial in $J(G)$. If either $M_x \mid u$ or $M_y\mid u$, then we are done. Now, we assume that $M_x\nmid u$ and $M_y\nmid u$. This forces that there exist $i$ and $j$ such that $x_i\nmid u$ and $y_j\nmid u$. For $k\neq i$, $J(G)\subset (x_i,y_k)$ which forces that $y_k\mid u$. This implies that $j=i$, and by symmetry for $k\neq i$, $ x_k\mid u$. Therefore, $M_i\mid u$ which gives the desired result.
\end{proof}
For a monomial $u$, \emph{support} of $u$, denoted by $\supp(u)$, is defined as $\supp(u)=\{x_i:x_i\mid u\} $.
The following lemma summarizes some basic properties of $J(G)^{(s)}$.
\begin{lemma}\label{techlemma} Let $G=C_{n,n}$ with notation as in \ref{setup}. Then, for $s \geq 2$,
	\begin{enumerate}[\rm i)]
		\item $(M_x):M_y=(M_x)$.
		\item $(M_x):M_i=(x_i)$ and $M_y:M_i=(y_i)$ for $1 \leq i \leq n$.
		\item $(M_j):M_i=(x_iy_i)$ for $i\neq j$.
		\item $(M_x,M_y,M_1,\ldots,M_{i-1}):M_{i}=(x_i,y_i)$ for $1 \leq i \leq n$.
		\item $J(G)^{s}:M_x=J(G)^{s-1}$.
		\item  $(J(G)^{s},M_x):M_y=\left( J(G)^{s-1}, M_x\right) $.
		\item $\left(J(G)^{s},M_x,M_y,M_1,\ldots,M_{i-1}\right):M_i=\left(x_i,y_i,M_i^{s-1}\right)$ for $1\leq i\leq n$.
	\end{enumerate}
\end{lemma}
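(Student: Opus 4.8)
The plan is to exploit two standard facts about monomial ideals together with the symbolic description of $J(G)^s$ afforded by the bipartiteness of $G$. First, for monomials $u,v$ one has $(u):v=(u/\gcd(u,v))$, for a monomial ideal $I=(u_1,\dots,u_r)$ and a monomial $w$ the colon distributes as $I:w=\sum_k\big((u_k):w\big)$, and a monomial lies in a sum of monomial ideals iff it lies in one of the summands. Second, since $G$ is bipartite, $J(G)^s=J(G)^{(s)}$, so by Remark \ref{symbCon} a monomial $u$ lies in $J(G)^s$ iff $\deg_{\mathfrak p}(u)\ge s$ for every $\mathfrak p\in\ass(S/J(G))$; here $\ass(S/J(G))=\{(x_i,y_j):i\neq j\}$ by the description $J(G)=\bigcap_{i\neq j}(x_i,y_j)$, and $\deg_{(x_i,y_j)}(u)=m_{x_i}(u)+m_{y_j}(u)$, where $m_{x_i}(u)$ and $m_{y_j}(u)$ denote the exponents of $x_i$ and $y_j$ in $u$.

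With these in hand, parts (i)--(iv) are immediate $\gcd$ computations. Recalling $M_x=\prod_k x_k$, $M_y=\prod_k y_k$ and $M_i=M/(x_iy_i)$, one reads off $\gcd(M_x,M_y)=1$, $\gcd(M_x,M_i)=M_x/x_i$, $\gcd(M_y,M_i)=M_y/y_i$ and, for $i\neq j$, $\gcd(M_j,M_i)=M/(x_iy_ix_jy_j)$. Substituting into $(u):v=(u/\gcd(u,v))$ gives (i), (ii) and (iii). For (iv) I would distribute the colon and combine: the $M_x,M_y$ contributions give $(x_i)$ and $(y_i)$ by (ii), while each $M_j$ with $j<i$ contributes $(x_iy_i)\subseteq(x_i)$ by (iii), so the total is $(x_i,y_i)$.

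For (v) and (vi) I would pass to the symbolic description. The crucial observation is that for every associated prime $\mathfrak p=(x_i,y_j)$ the monomial $M_x$ contributes exactly $1$ to $\deg_{\mathfrak p}$ through its $x_i$-factor, so $\deg_{\mathfrak p}(uM_x)=\deg_{\mathfrak p}(u)+1$ uniformly in $\mathfrak p$. Hence $uM_x\in J(G)^{(s)}$ iff $\deg_{\mathfrak p}(u)\ge s-1$ for all $\mathfrak p$, i.e. iff $u\in J(G)^{(s-1)}=J(G)^{s-1}$, which is (v). Part (vi) then follows by distributing the colon as $(J(G)^s,M_x):M_y=\big(J(G)^s:M_y\big)+\big((M_x):M_y\big)$, where the first summand is $J(G)^{s-1}$ by the version of (v) with the roles of $x$ and $y$ exchanged, and the second is $(M_x)$ by (i).

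The main work, and the \emph{main obstacle}, is part (vii). Distributing the colon and using (ii), (iii) for the $M_x,M_y,M_1,\dots,M_{i-1}$ pieces reduces it to $\big(J(G)^s:M_i\big)+(x_i,y_i)$, so everything hinges on computing $J(G)^s:M_i$ up to $(x_i,y_i)$. The containment $M_i^{s-1}\in J(G)^s:M_i$ is clear because $M_i\in J(G)$ forces $M_i^s\in J(G)^s$. For the reverse I would take a monomial $u$ with $uM_i\in J(G)^{(s)}$ and $x_i\nmid u$, $y_i\nmid u$, and deduce $M_i^{s-1}\mid u$. Testing against $\mathfrak p=(x_i,y_b)$ with $b\neq i$, where $M_i$ contributes $0$ at $x_i$ and $1$ at $y_b$ while $x_i\nmid u$, forces $m_{y_b}(u)\ge s-1$; symmetrically $\mathfrak p=(x_a,y_i)$ with $a\neq i$ forces $m_{x_a}(u)\ge s-1$. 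Together these give $\big(\prod_{a\neq i}x_a\big)^{s-1}\big(\prod_{b\neq i}y_b\big)^{s-1}=M_i^{s-1}\mid u$, completing (vii). The only delicate point is selecting exactly those associated primes containing one of $x_i,y_i$, so that the vanishing of the corresponding factor of $M_i$ cooperates with the hypotheses $x_i\nmid u$, $y_i\nmid u$; the remaining primes $(x_a,y_b)$ with $a,b\neq i$ impose no extra condition and may be ignored.
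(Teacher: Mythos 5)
Your proposal is correct and follows essentially the same route as the paper: the paper also treats (i)--(iv) as standard colon/gcd computations and proves (vii) by the same prime-by-prime degree count against the associated primes $(x_a,y_b)$ using $J(G)^s=J(G)^{(s)}=\bigcap_{a\neq b}(x_a,y_b)^s$. The only cosmetic differences are that the paper cites an external lemma for (v) where you give the direct $\deg_{\mathfrak p}(M_x)=1$ argument, and that it handles (vii) by a case analysis on whether $x_i$ or $y_i$ divides $uM_i$ rather than by distributing the colon.
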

\begin{proof} (i)-(iv) are standard.\\
	(v) The assertion follows from  \cite[Lemma 3.2]{Seyed16}.\\
	(vi) Since $\supp(M_x) \cap \supp(M_y)=\emptyset$, by (v), $(J(G)^{s},M_x):M_y=\left( J(G)^{s-1}, M_x\right) $.\\
	(vii) By (iv), $(J(G)^{s},M_x,M_y,M_1,\ldots,M_{i-1}):M_i \supset (x_i,y_i,M_i^{s-1})$. Let 
	$u$ be a monomial in $(J(G)^{s},M_x,M_y,M_1,\ldots,M_{i-1}):M_i$. If either $x_i \mid uM_{i}$ or
	$y_i \mid uM_i$, then $u \in (x_i,y_i, M_i^{s-1})$. Suppose $x_i \nmid uM_i$ and $y_i \nmid uM_i$.
	Note that $(M_x,M_y,M_1,\dots,M_{i-1})\subset (x_i,y_i)$ and $x_i \nmid uM_i$ and $y_i \nmid uM_i$ forces that $uM_i\in J(G)^s $. Since $G$ is a bipartite graph, by \cite[Corollary 2.6]{GRV05}, $J(G)^{(s)}=J(G)^s$, and hence $J(G)^{s}=\bigcap\limits_{i\neq j}(x_i,y_j)^s$. For $k\neq i$, $J(G)^s\subset (x_i,y_k)^s$, and $x_i \nmid uM_i$ which implies that $y_k^s\mid uM_i$. Note that for $k\ne i$, $y_k^2\nmid M_i$ which forces that $y_k^{s-1}\mid u$. Similarly, we get that $x_k^{s-1}\mid u$ for all $k\neq i$. Hence $M_i^{s-1} \mid u$.
\end{proof}
We now proceed to compute the regularity of powers of $J(G)$.
\begin{theorem}\label{basecase}  Let $G=C_{n,n}$. Then for all $s \geq 1$,
	\[
	\reg(J(G)^s)=s \cdot \deg(J(G)).
	\] 
\end{theorem}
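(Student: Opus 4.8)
The plan is to prove the formula $\reg(J(G)^s) = s \cdot \deg(J(G)) = 2s(n-1)$ by induction, using the short exact sequences coming from the colon computations in Lemma \ref{techlemma}. The base case $s=1$ should follow by directly analyzing the resolution of $J(G) = (M_x, M_y, M_1, \ldots, M_n)$, but the inductive machinery is the heart of the matter, so I would set up the induction carefully first.

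\medskip

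\textbf{Setting up the filtration.} The key idea is to filter $S/J(G)^s$ by successively quotienting out the generators $M_x, M_y, M_1, \ldots, M_n$, peeling them off one at a time. Concretely, for each generator I would use a short exact sequence of the form
\[
0 \longrightarrow \frac{S}{(J(G)^s, \text{earlier gens}) : M_i}(-\deg M_i) \xrightarrow{\;\cdot M_i\;} \frac{S}{(J(G)^s, \text{earlier gens})} \longrightarrow \frac{S}{(J(G)^s, \text{earlier gens}, M_i)} \longrightarrow 0,
\]
and apply the standard fact that the regularity of the middle term is bounded by the regularities of the two outer terms (with the appropriate degree shift $\deg M_i$ on the colon piece). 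Lemma \ref{techlemma} parts (v)--(vii) identify every colon ideal appearing in this filtration: peeling off $M_x$ gives colon $J(G)^{s-1}$, peeling off $M_y$ gives $(J(G)^{s-1}, M_x)$, and peeling off each $M_i$ gives $(x_i, y_i, M_i^{s-1})$. The last of these is a complete intersection-type ideal whose regularity is easy to compute, since $x_i, y_i$ form a regular sequence and $M_i^{s-1}$ has degree $(s-1)\deg(J(G))$ with support disjoint from $\{x_i, y_i\}$.

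\medskip

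\textbf{Running the induction.} With the filtration in place, the regularity of $S/J(G)^s$ is controlled by the regularities of $S/J(G)^{s-1}$ (from the $M_x$ step), $S/(J(G)^{s-1}, M_x)$ (from the $M_y$ step), and the $n$ terms $S/(x_i, y_i, M_i^{s-1})$, each shifted by $\deg M_i = 2n-2 = \deg(J(G))$. The inductive hypothesis gives $\reg(S/J(G)^{s-1}) = (s-1)\deg(J(G)) $ correct value after shifting, and one computes $\reg(S/(x_i,y_i,M_i^{s-1})) = (s-1)\deg(J(G))$ as well since $M_i^{s-1}$ is a single monomial on the remaining variables. After the degree shift by $\deg(J(G))$, each outer term contributes at most $s \cdot \deg(J(G))$ to $\reg(S/J(G)^s)$, which after translating back to $\reg(J(G)^s)$ yields the upper bound $s \cdot \deg(J(G))$. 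I would also need to track that the $(J(G)^{s-1}, M_x)$ term does not inflate the bound, which should follow from an auxiliary induction or a direct degree count, since $M_x$ has degree $n < 2n-2$ for $n \geq 3$.

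\medskip

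\textbf{The lower bound and the main obstacle.} The inequality $\reg(J(G)^s) \geq s \cdot \deg(J(G))$ should come from exhibiting a nonzero graded Betti number in the expected degree: the element $M_i^s = (M/(x_iy_i))^s$ is a minimal generator of $J(G)^s$ of degree $s(2n-2)$, and one expects the extremal syzygies to persist in the top degree. I expect the main obstacle to be the bookkeeping in the $M_y$ and $(J(G)^{s-1}, M_x)$ steps: unlike the clean complete-intersection colons $(x_i, y_i, M_i^{s-1})$, the term $S/(J(G)^{s-1}, M_x)$ is not of a form where the regularity is immediately transparent, so controlling it likely requires a \emph{secondary} induction (peeling $M_x$ off $J(G)^{s-1}$) nested inside the main one, and verifying that the shifts align so no term exceeds $s \cdot \deg(J(G))$. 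Making the two inductions interlock correctly, and confirming the lower bound is actually achieved rather than merely bounded, is where the real care is needed.
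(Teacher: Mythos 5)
Your proposal follows essentially the same route as the paper: peel off $M_x$, then $M_y$, then $M_1,\dots,M_n$ via short exact sequences, use Lemma \ref{techlemma}(v)--(vii) to identify the colon ideals, compute $\reg(x_i,y_i,M_i^{s-1})$ as a regular sequence, and run a secondary induction to control $(J(G)^s,M_x)$ --- which is exactly the paper's ``Claim.'' The only differences are cosmetic: the paper dispatches the lower bound by citing a known lemma and the base case $s=1$ by citing the known projective dimension of $S/I(C_{n,n})$, rather than exhibiting extremal Betti numbers directly as you suggest.
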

\begin{proof}
	It follows from \cite[Lemma 3.1]{Seyed} that $s\cdot\deg(J(G))\leq \reg(J(G)^s)$. We need to prove that
	$\reg(J(G)^s) \leq s\cdot \deg(J(G))$. By \cite[Proposition 8.1.10]{Herzog'sBook}, $\reg(J(G))=\pd(S/I(G))$.  
	If $s=1$, then the result follows from \cite[Theorem 4.3]{Rather2018}. So, assume that $s \geq 2.$
	%In order to proceed, we use $M_x,M_y,M_1,\dots,M_n$ which are defined in Notation \ref{setup}.  
	Consider the following short exact sequence:
	\begin{equation}\label{exactCrown}
	0 \longrightarrow \dfrac{S} {J(G)^s:M_x}(-n) \longrightarrow \dfrac{S}{J(G)^s} \longrightarrow 
	\dfrac{S}{(J(G)^s,M_x)} \longrightarrow 0.
	\end{equation}
	By Lemma \ref{techlemma}, $J(G)^s:M_x=J(G)^{s-1}$.
	Then, by induction, $$\reg \left( J(G)^{s-1}(-n)\right)\leq (s-1)\cdot\deg(J(G))+n \leq s\cdot\deg(J(G)).$$
	Now, by Equation \eqref{exactCrown}, it is sufficient to show that $\reg(J(G)^s,M_x)) \leq s\cdot\deg(J(G)).$
	\vskip 2mm \noindent
	\textbf{Claim:} $\reg(J(G)^s,M_x) \leq s \cdot \deg(J(G))$ for all $s \geq 1$.
	\vskip 1mm \noindent
	\textit{Proof of the claim:}
	We proceed by induction on $s$. If $s=1$, then $(J(G),M_x)=J(G)$ and the result follows from \cite[Theorem 4.3]{Rather2018}. Assume that $s \geq 2$. 
	Set $K=(J(G)^s,M_x)$, $K_1=(K,M_y)$ and for $2\leq l\leq n+1$, $K_l=(K_{l-1},M_{l-1})$. Note that $K_{n+1}=J(G)$.
	Consider the following short exact sequences: 
	\begin{eqnarray}\label{exact1}
	0 \longrightarrow \frac{S}{ K:M_{y}}(-n) \longrightarrow \frac{S}{K} \longrightarrow 
	\frac{S}{K_{1}} \longrightarrow 0,
	\end{eqnarray}
	for $1 \leq l \leq n,$
	\begin{eqnarray}\label{exact2}
	0 \longrightarrow \frac{S}{ K_l:M_{l}}(-(2n-2)) \longrightarrow \frac{S}{K_l} \longrightarrow 
	\frac{S}{K_{l+1}} \longrightarrow 0 . 
	\end{eqnarray}
	Using Equations \eqref{exact1} and \eqref{exact2}, we get 
	\begin{eqnarray*}
		\reg(K) \leq  \max\left\{ \reg(K:M_y)+n,~\reg(J(G)),~\reg(K_l:M_{l})+2n-2 \text{ for }1\leq l \leq n\right\}.
	\end{eqnarray*}
	We now prove that each of regularities appearing on the right hand side of the above inequality is 
	bounded above by $s \cdot \deg(J(G))$. By Lemma \ref{techlemma}, Theorem \ref{basecase} and 
	\cite[Theorem 4.3]{Rather2018}, 
	we have
	\begin{align*}
	\reg(K:M_y)=&\reg(J(G)^{s-1},M_x),~\reg(J(G))= \deg(J(G))\\
	\reg(K_l:M_{l})=&\reg(x_{l},y_{l}, M_{l}^{s-1}), \text{ for all $1\leq l \leq n$}.
	\end{align*}
	By induction, $\reg(K:M_y)\leq(s-1)\cdot\deg(J(G))$. Since $x_l,y_l,M_l^{s-1}$ is a regular sequence 
	with $\deg(x_l)=\deg(y_l)=1$ and $\deg(M_l)^{s-1}=(s-1)\cdot\deg(J(G))$, 
	$\reg(K_l:M_l)=(s-1)\cdot \deg(J(G))$. % follows from the degree 
	Therefore, $\reg(J(G)^s,M_x) \leq s\cdot\deg(J(G))$. %each of regularities appearing on the right hand side of the above equality is 
	%bounded above by $s \cdot \deg(J(G))$ which proves our claim, and hence we get $\reg(J(G)^s) =s\cdot\deg(J(G))$.
\end{proof}

\subsection{Hilbert series.}
We compute the Hilbert series of symbolic powers of cover ideals of crown graphs. We begin by computing the Hilbert series of cover ideal.
\begin{theorem}\label{hilbertscrown}
	Let $G=C_{n,n}$ for $n \geq 3$ with notation as in \ref{setup}. Then 
	\[ H\left(\dfrac{S}{J(G)},t\right)= \frac{\displaystyle \sum_{i=0}^{n-1} (i+1)t^i + \displaystyle \sum_{i=0}^{n-3}(n-i-1)t^{n+i} - (n-1)t^{2n-2}}{(1-t)^{2n-2}}.
	\]
	%In particular, $e\left(\dfrac{S}{J(G)}\right)=n(n-1)$.
\end{theorem}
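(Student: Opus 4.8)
The plan is to compute $H(S/J(G),t)$ by building the ideal $J(G)=(M_x,M_y,M_1,\ldots,M_n)$ one generator at a time, in exactly the order already used in Lemma \ref{techlemma}, and to exploit the additivity of the Hilbert series along short exact sequences. For a monomial $g$ and a monomial ideal $L$, the exact sequence
\[
0 \longrightarrow \frac{S}{L:g}(-\deg g) \longrightarrow \frac{S}{L} \longrightarrow \frac{S}{(L,g)} \longrightarrow 0
\]
gives $H(S/(L,g),t)=H(S/L,t)-t^{\deg g}\,H(S/(L:g),t)$. The point is that Lemma \ref{techlemma} already records every colon ideal arising in this filtration, so each step collapses to the Hilbert series of a principal ideal or a complete intersection, which is elementary.

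Concretely, I would first record $H(S/(M_x),t)=(1-t^n)/(1-t)^{2n}$, since $\deg M_x=n$ and $S$ has $2n$ variables. Adjoining $M_y$ and using $(M_x):M_y=(M_x)$ from Lemma \ref{techlemma}(i), together with $\deg M_y=n$, yields
\[
H\!\left(\frac{S}{(M_x,M_y)},t\right)=(1-t^n)\,H\!\left(\frac{S}{(M_x)},t\right)=\frac{(1-t^n)^2}{(1-t)^{2n}}.
\]
Next, with $L_i=(M_x,M_y,M_1,\ldots,M_{i-1})$, adjoining $M_i$ for $i=1,\ldots,n$ uses Lemma \ref{techlemma}(iv), namely $L_i:M_i=(x_i,y_i)$, a complete intersection with $H(S/(x_i,y_i),t)=(1-t)^{-(2n-2)}$, while $\deg M_i=2n-2$. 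Each of these $n$ steps therefore subtracts the same quantity $t^{2n-2}/(1-t)^{2n-2}$, independently of $i$, so that
\[
H\!\left(\frac{S}{J(G)},t\right)=\frac{(1-t^n)^2}{(1-t)^{2n}}-\frac{n\,t^{2n-2}}{(1-t)^{2n-2}}.
\]

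It then remains to rewrite this rational function in the claimed shape over the common denominator $(1-t)^{2n-2}$. Writing $(1-t^n)^2/(1-t)^2=(1+t+\cdots+t^{n-1})^2$ and expanding the convolution gives $\sum_{i=0}^{n-1}(i+1)t^i+\sum_{i=0}^{n-2}(n-1-i)t^{n+i}$. The only place that needs care is the boundary term: the square contributes exactly $t^{2n-2}$ in top degree (the $i=n-2$ summand), and combining this with the subtracted $-n\,t^{2n-2}$ turns the top coefficient into $-(n-1)$, which reproduces the stated numerator
\[
\sum_{i=0}^{n-1}(i+1)t^i+\sum_{i=0}^{n-3}(n-i-1)t^{n+i}-(n-1)t^{2n-2}.
\]
I expect no genuine obstacle: the filtration is forced by the colon computations of Lemma \ref{techlemma}, and the remaining work is bookkeeping. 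The main point to watch is tracking the coefficient at $t^{2n-2}$ in the merge, and observing that the hypothesis $n\geq 3$ guarantees the middle sum $\sum_{i=0}^{n-3}$ lies in the correct (non-degenerate) range.
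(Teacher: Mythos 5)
Your proposal is correct and follows essentially the same route as the paper: the paper also filters $J(G)$ by adjoining $M_1,\ldots,M_n$ to $I_0=(M_x,M_y)$, uses the colon ideals $(M_x,M_y,M_1,\ldots,M_{i-1}):M_i=(x_i,y_i)$ from Lemma \ref{techlemma} in the same short exact sequences, and arrives at $H(S/J(G),t)=(1-t^n)^2/(1-t)^{2n}-n\,t^{2n-2}/(1-t)^{2n-2}$ before expanding. Your bookkeeping of the top coefficient $1-n=-(n-1)$ at $t^{2n-2}$ is the same final step the paper leaves implicit.
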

\begin{proof}
	Set $I_0=(M_x,M_y)$ and $I_{i}=(I_{i-1},M_i)$ for all $1 \leq i\leq n$.	
	For $1\leq i\leq n$, consider the exact sequence:
	\begin{equation*}
	0 \longrightarrow \frac{S}{I_{i-1}:M_i}(-(2n-2))
	{\longrightarrow} \frac{S}{I_{i-1}} \longrightarrow
	\frac{S}{I_i} \longrightarrow 0.\nonumber
	\end{equation*}
	We have
	$H\left(\frac{S}{J(G)},t \right) 
	= H\left(\frac{S}{I_0},t\right)-t^{2n-2}\sum_{i=1}^n H\left(\frac{S}{I_{i-1}:M_i},t\right).$
	Since $M_x,M_y$ is a regular sequence on $S$ of degree $n$, %with $\deg(M_x)=n$ and $\deg(M_y)=n$, 
	we get $H\left(\dfrac{S}{I_0},t\right)=\dfrac{(1-t^n)^2}{(1-t)^{2n}}.$
	By Lemma \ref{techlemma}, $I_{i-1}:M_i=(x_i,y_i)$ for any $1 \leq i \leq n$
	which implies that 
		$H\bigg( \frac{S} {I_{i-1}:M_i},t \bigg)=\frac{1}{(1-t)^{2n-2}}.$
	Hence 
	\begin{eqnarray*}
		H\left(\dfrac{S}{J(G)},t\right) = 
		%\frac{(1-t^n)^2}{(1-t)^{2n}} - \frac{nt^{2n-2}}{(1-t)^{2n-2}} =
		\frac{ \displaystyle \sum_{i=0}^{n-1} (i+1)t^i + \displaystyle \sum_{i=0}^{n-3} (n-i-1)t^{n+i}-(n-1)t^{2n-2}}{(1-t)^{2n-2}}.
	\end{eqnarray*}
	
\end{proof}
We end this section by proving the following main result.%now compute the Hilbert series of $J(G)^s$.
\begin{theorem}\label{Hil-crown}
	Let $G=C_{n,n}$ for $n \geq 3$ with notation as in \ref{setup}. Then for all $s \geq 1$, $H\left( \frac{S}{J(G)^s},t \right) $
	\[
	= \frac{ \displaystyle \sum_{i=0}^{ns-1} (i+1)t^i + \displaystyle \sum_{i=0}^{n-3}(n-i-1)st^{ns+i} - (n-1)st^{ns+n-2} - \displaystyle \sum_{i=0}^{s-2} (i+1)nt^{s(2n-2)-i(n-2)}}{(1-t)^{2n-2}}.
	\]
\end{theorem}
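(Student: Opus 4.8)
The plan is to induct on $s$, reusing the two families of short exact sequences that already appear in the proof of Theorem \ref{basecase} but now extracting Hilbert series rather than regularity. Write $A_s(t) = H(S/J(G)^s, t)$ and $B_s(t) = H(S/(J(G)^s, M_x), t)$. The base case $s = 1$ is exactly Theorem \ref{hilbertscrown}: for $s=1$ the last sum in the claimed formula is empty (its range $0 \leq i \leq s-2$ is void), the first sum runs to $n-1$, the middle sum is unchanged, and the isolated term is $-(n-1)t^{2n-2}$, so the formula specializes correctly. For the inductive step I would set up two coupled recursions and solve them.

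First, applying additivity of Hilbert series to the short exact sequence \eqref{exactCrown} and using $J(G)^s : M_x = J(G)^{s-1}$ from Lemma \ref{techlemma}(v) gives
\begin{equation*}
A_s(t) = t^n A_{s-1}(t) + B_s(t).
\end{equation*}
Next, reusing the filtration $K = (J(G)^s, M_x) \subset K_1 \subset \cdots \subset K_{n+1} = J(G)$ together with the sequences \eqref{exact1} and \eqref{exact2}, and invoking Lemma \ref{techlemma}(vi) for $K : M_y = (J(G)^{s-1}, M_x)$ and Lemma \ref{techlemma}(vii) for $K_l : M_l = (x_l, y_l, M_l^{s-1})$, I telescope the sequences \eqref{exact2} and feed the outcome into \eqref{exact1}. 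Since $x_l, y_l, M_l^{s-1}$ is a regular sequence of degrees $1, 1, (s-1)(2n-2)$ in the $2n$-variable ring $S$, its quotient has Hilbert series $\bigl(1 - t^{(s-1)(2n-2)}\bigr)/(1-t)^{2n-2}$; the $n$ summands (for $l=1,\dots,n$) are identical, and $H(S/K_{n+1}, t) = H(S/J(G), t)$ is supplied by Theorem \ref{hilbertscrown}. This yields, for $s \geq 2$,
\begin{equation*}
B_s(t) = t^n B_{s-1}(t) + \frac{n t^{2n-2}\bigl(1 - t^{(s-1)(2n-2)}\bigr)}{(1-t)^{2n-2}} + H(S/J(G), t),
\end{equation*}
with $B_1(t) = H(S/J(G), t)$.

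The second recursion is self-contained, so I would unroll it to a closed form for $B_s$ and then use $A_s = \sum_{k=1}^{s} t^{n(s-k)} B_k$. Writing the numerator of $H(S/J(G),t)$ as $N(t) = \tfrac{(1-t^n)^2}{(1-t)^2} - n t^{2n-2}$ and clearing the common denominator $(1-t)^{2n-2}$, the $+N(t)$ inhomogeneous pieces reassemble, via the partial-sum identity $\sum_{m=0}^{s-1}(m+1)u^m = \bigl(1 - (s+1)u^s + s u^{s+1}\bigr)/(1-u)^2$ with $u = t^n$, into $\bigl(1-(s+1)t^{ns}+st^{n(s+1)}\bigr)/(1-t)^2$; expanding through $\tfrac{1}{(1-t)^2} = \sum_{i \geq 0}(i+1)t^i$ produces the first three groups of the claimed numerator, with the coefficient $s(n-r-1)$ of $t^{ns+r}$ emerging from cancellation between the geometric tail and its shift. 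The crucial simplification is that the exponents coming from the $-t^{(s-1)(2n-2)}$ corrections collapse to $ns + l(n-2)$, \emph{independent} of the outer summation index; after reindexing $l \mapsto s-i$ these give precisely the fourth sum $-n\sum_{i=0}^{s-2}(i+1)t^{s(2n-2)-i(n-2)}$.

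The main obstacle is purely the bookkeeping in this last step: one must track how the doubly-indexed sums produced by unrolling two nested recursions collapse, and confirm the apex coefficient, where the $+s\,t^{ns+n-2}$ at the top of the tent-function expansion combines with the $-n s\, t^{ns+n-2}$ produced by the difference of the two regular-sequence correction terms, leaving exactly $-s(n-1)t^{ns+n-2}$. One must also check that all coefficients of $t^{j}$ with $j > ns+n-2$ vanish, so that the result is genuinely the stated polynomial numerator over $(1-t)^{2n-2}$; this follows since $(1-t)^2$ divides $1-(s+1)t^{ns}+st^{n(s+1)}$ to second order. No new idea beyond Theorem \ref{basecase} and Theorem \ref{hilbertscrown} is needed; the difficulty lies entirely in organizing the combinatorics so that all four groups of terms emerge with the correct coefficients and exponents.
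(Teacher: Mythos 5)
Your proposal is correct and follows essentially the same route as the paper: the same exact sequence \eqref{exactCrown} giving $A_s = t^nA_{s-1}+B_s$ via Lemma \ref{techlemma}(v), and the same filtration \eqref{exact1}--\eqref{exact2} with Lemma \ref{techlemma}(vi),(vii) giving the recursion for $B_s(t)=H(S/(J(G)^s,M_x),t)$, with base cases from Theorem \ref{hilbertscrown}. The only cosmetic difference is that the paper states the closed form for $B_s$ as a claim and verifies it by induction, whereas you unroll the recursion explicitly; the bookkeeping is the same either way.
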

\begin{proof}
	We proceed by induction on $s$. By Theorem \ref{hilbertscrown}, the result is true for $s=1$.
	Assume that $s\geq 2$.
	Using Lemma \ref{techlemma} (v) and exact sequence \eqref{exactCrown}, we get 
	\begin{align}\label{main-eq}
	H\left( \frac{S}{J(G)^s},t \right) = t^n H\left( \frac{S}{J(G)^{s-1}},t \right) + H\left( \frac{S} {\left( J(G)^s,M_x \right)},t \right).
	\end{align}
	%By induction we know that $H\left( \frac{S}{J(G)^{s-1}},t \right)$. Now it is enough to compute $H\left( \frac{S} {\left( J(G)^s,M_x \right)},t \right).$
	\vskip 2mm \noindent
	\textbf{Claim:} For all $s \geq 1$, $H\bigg( \frac{S} {\big( J(G)^s,M_x \big)},t \bigg) $
	$$ =\frac{\displaystyle \sum_{i=0}^{n-1} 
		(i+1)t^i+n \displaystyle \sum_{i=n}^{ns-1}t^i + \displaystyle \sum_{i=1}^{n-2}(n-i)t^{ns+i-1} - 
		(n-1)t^{ns+n-2} - n \displaystyle \sum_{i=0}^{s-2}t^{s(2n-2)-i(n-2)}}{(1-t)^{2n-2}}.$$
	Now, it is enough to prove the above claim as the desired result follows from induction argument, claim and Equation \eqref{main-eq}.
	\\ \textit{Proof of the claim:}
	For $s=1$ the result follows from Theorem \ref{hilbertscrown} and 
	the fact that $(J(G),M_x)=J(G)$.
	Assume that $s \geq 2$. 
	%Set $K=(J(G)^s,M_x)$, $K_1=(K,M_y)$ and for $2\leq l\leq n+1$, $K_l=(K_{l-1},M_{l-1})$. Note that $K_{n+1}=J(G)$. %Assume by induction that for any crown graph $G=C_{n,n}$, 
	Using Equations \eqref{exact1} and \eqref{exact2}, we get\
	\begin{equation}\label{hilbertSeq}
	H\left(\dfrac{S}{K},t\right)= t^nH\left(\dfrac{S}{K:M_y },t\right)+\sum_{i=1}^{m}t^{2n-2}H\left(\dfrac{S}{K_i:M_i},t\right)+H\left(\dfrac{S}{J(G)},t\right).
	\end{equation} 
	
	By Lemma \ref{techlemma}(vi) and (vii),  $K:M_y=(J(G)^{s-1}, M_x)$ and $K_i:M_i=(x_i,y_i, M_i^{s-1})$.
	Since $x_i, y_i, M_i^{s-1}$ is a regular sequence with $\deg(M_i)=2n-2$, we get that 
	$$H\left(\dfrac{S}{K_i:M_i},t\right)=\dfrac{(1-t^{(s-1)(2n-2)})}{(1-t)^{2n-2}}.$$ 
       Now the claim follows from Equation \eqref{hilbertSeq}, 
	Theorem \ref{hilbertscrown} and induction.	
\end{proof}

\section{Complete multipartite graph}\label{complete}
In this section, we study the regularity and Hilbert series of symbolic powers of cover
ideals of complete multipartite graphs. Throughout this section, $G$ denotes a  complete multipartite graph.
\subsection{Regularity}
We determine the regularity of symbolic powers of cover ideals of 
 complete multipartite graphs. In order to compute the regularity of 
$J(G)^{(s)}$, we first find the generators of $J(G)$ and its symbolic powers.
We begin by fixing some notation which are used for the rest of this section.

\begin{notation}\label{multi-nota}
	Let $G=K_{p_1,\dots,p_k}$ be a complete multipartite graph with the vertex set
	$$V(G)=\bigcup_{i=1}^{k} \{x_{ij}: 1\leq j\leq p_i \},~~ p_1\geq p_2\geq\dots\geq p_k\geq 1 \text{ and }
	k\geq 2.$$ 
	Set
	$$n=p_1+p_2+\dots+p_k,~ M_i=\prod_{j=1}^{p_i}x_{ij} \text{ for }
	1\leq i\leq k,~M=\prod_{i=1}^{k} M_i \text{ and }
	N_i=\frac{M}{M_i} \text{ for } 1\leq i\leq k.$$ 
\end{notation}

The following lemma describes the minimal monomial generating set  of $J(G)$.
\begin{lemma} Let $G=K_{p_1,\dots,p_k}$ with the notation as in \ref{multi-nota}. Then $J(G)=(N_i \mid 1\leq i\leq k)$. In particular, $\deg(J(G))=n-p_k$.
\end{lemma}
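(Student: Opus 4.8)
The plan is to identify the minimal generators of $J(G)$ with the monomials attached to minimal vertex covers, and to show that for a complete multipartite graph these covers are exactly the complements of the parts. Recall that $J(G)$ is generated by the monomials $\prod_{x\in C}x$ as $C$ ranges over minimal vertex covers, and that $C$ is a vertex cover precisely when $V(G)\setminus C$ is an independent set. So the entire statement reduces to understanding the independent sets of $K_{p_1,\dots,p_k}$, exactly as the crown-graph lemma rested on its primary decomposition.

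First I would record the key structural observation: a subset $X\subseteq V(G)$ is independent if and only if $X$ is contained in a single part $\{x_{i1},\dots,x_{ip_i}\}$. Indeed, any two vertices lying in distinct parts are adjacent in a complete multipartite graph, so an independent set cannot meet two parts; conversely each part spans no edge and is therefore independent. Hence the maximal independent sets are exactly the $k$ parts, and the minimal vertex covers are exactly their complements $V(G)\setminus\{x_{i1},\dots,x_{ip_i}\}$, whose associated monomial is $M/M_i=N_i$.

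To promote this to the ideal identity I would prove both inclusions at the level of monomials. For $(N_i\mid 1\leq i\leq k)\subseteq J(G)$ it suffices to check that $\supp(N_i)=V(G)\setminus\{x_{i1},\dots,x_{ip_i}\}$ is a vertex cover: every edge joins two distinct parts, so at most one of its endpoints lies in part $i$ while the other lies in $\supp(N_i)$, so $N_i\in J(G)$. For the reverse inclusion I would take a monomial $u\in J(G)$ and, using the decomposition $J(G)=\bigcap_{\{x_{ia},x_{jb}\}\in E(G)}(x_{ia},x_{jb})$ as in the crown case, conclude that $\supp(u)$ is a vertex cover; then the complement $V(G)\setminus\supp(u)$ is independent, hence sits inside a single part $i$, which forces $\supp(u)\supseteq V(G)\setminus\{x_{i1},\dots,x_{ip_i}\}$ and therefore $N_i\mid u$. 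This is the step I expect to require the most care, since one must be sure the complement of the support of an \emph{arbitrary} generator is genuinely forced into one part; but for complete multipartite graphs ``independent'' and ``contained in one part'' coincide, which closes the argument.

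The degree statement then follows at once: since $\deg(J(G))$ is the maximum size of a minimal vertex cover, and $|V(G)\setminus\{x_{i1},\dots,x_{ip_i}\}|=n-p_i$ is largest exactly when $p_i$ is smallest, the ordering $p_1\geq\cdots\geq p_k$ gives $\deg(J(G))=n-p_k$. I do not anticipate any deeper obstacle beyond the reverse inclusion above.
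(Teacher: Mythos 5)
Your proof is correct and takes essentially the same approach as the paper: both directions are handled by observing that $\supp(N_i)$ is a vertex cover and, conversely, by using the decomposition of $J(G)$ as an intersection of edge primes $(x_{ia},x_{jb})$ to force $\supp(u)$ to contain the complement of a single part (the paper phrases your ``complement of the support is independent, hence lies in one part'' concretely as: if $x_{1,1}\nmid u$ then $x_{i,j}\mid u$ for all $i\neq 1$, so $N_1\mid u$). The degree computation via $p_1\geq\cdots\geq p_k$ is the same.
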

\begin{proof}
	Note that $\supp(N_i)$ is a vertex cover of $G$ which implies that $(N_i\mid 1\leq i\leq k)\subset J(G)$. Let $u$ be a monomial in $J(G)$. 
	If for all $i,j$, $x_{i,j}\mid u$, then $u\in (N_i \mid 1\leq i\leq k)$. Now, without loss of generality, assume that $x_{1,1}\nmid u$. 
	Since for all 
	$i\neq 1$ and $j$, $J(G)\subset (x_{1,1},x_{i,j}) $, we get $x_{i,j}\mid u$, which further implies that $N_1\mid u$. This completes the proof.
\end{proof}
If $k=2$, then by \cite[Corollary 2.6]{GRV05},
$J(G)^s=J(G)^{(s)}$ for all $s \geq 1$. If $k \geq 3$, then 
$G$ is non-bipartite graph and every vertex in $G$ is adjacent to every odd cycle in $G$. 
Therefore, by \cite[Theorem 4.9 and Remark 4.10]{DrabGue},
$J(G)^{(s)}=MJ(G)^{(s-2)}+J(G)^s$.
Now, we further reduce the above expression.
\begin{lemma}\label{symbolicGen}
	Let $G=K_{p_1,\dots,p_k}$ with the notation as in \ref{multi-nota}.
	Then $$J(G)^{(s)}=MJ(G)^{(s-2)}+\left( N_j^s\mid j\in[k]\right).$$
\end{lemma}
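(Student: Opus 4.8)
The plan is to leverage the identity $J(G)^{(s)} = M J(G)^{(s-2)} + J(G)^s$ recorded just above the statement, which holds for $k \geq 3$ by \cite{DrabGue}; when $k = 2$ one has $J(G)^{(s)} = J(G)^s$, and since $M = M_1 M_2 = N_2 N_1 \in J(G)^2$ forces $M J(G)^{(s-2)} = M J(G)^{s-2} \subseteq J(G)^s$, the same identity persists. Granting it, the goal reduces to replacing the summand $J(G)^s$ by the smaller ideal $(N_j^s \mid j \in [k])$, i.e.\ to proving
\[
M J(G)^{(s-2)} + J(G)^s = M J(G)^{(s-2)} + (N_j^s \mid j \in [k]).
\]
The containment $\supseteq$ is immediate, since $N_j \in J(G)$ gives $N_j^s \in J(G)^s$. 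So the whole content lies in the reverse inclusion $J(G)^s \subseteq M J(G)^{(s-2)} + (N_j^s \mid j \in [k])$.

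First I would observe that $J(G)^s$ is generated as an ideal by the products $N_{i_1} N_{i_2} \cdots N_{i_s}$ with $i_1,\dots,i_s \in [k]$, so it suffices to place each such product in the right-hand side, splitting into two cases according to whether the indices all agree. If $i_1 = \cdots = i_s = j$, the product is exactly $N_j^s \in (N_j^s \mid j \in [k])$, and there is nothing to prove. The crucial case is when at least two distinct indices occur; after reordering, say $i_1 = a$ and $i_2 = b$ with $a \neq b$. The key combinatorial observation is that $N_a N_b$ is divisible by the full product $M$: writing $N_c = \prod_{i \neq c} M_i$, the block $M_a$ is absent only from $N_a$ and $M_b$ only from $N_b$, while every other $M_c$ occurs in both, so each block appears in $N_a N_b$ with multiplicity at least one. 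Hence $N_a N_b = M \cdot \tfrac{M}{M_a M_b}$, where $\tfrac{M}{M_a M_b} = \prod_{c \neq a,b} M_c$ is a genuine monomial of $S$ (this is exactly where $a \neq b$ is used).

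Consequently
\[
N_{i_1} \cdots N_{i_s} = M \cdot \frac{M}{M_a M_b} \cdot \prod_{t=3}^{s} N_{i_t},
\]
and since $\prod_{t=3}^{s} N_{i_t} \in J(G)^{s-2} \subseteq J(G)^{(s-2)}$ and $J(G)^{(s-2)}$ is an ideal, the factor $\tfrac{M}{M_a M_b}\prod_{t=3}^{s} N_{i_t}$ still lies in $J(G)^{(s-2)}$. Thus the product lies in $M J(G)^{(s-2)}$, which establishes the reverse inclusion and the lemma.

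I do not expect a genuine obstacle: the argument rests on the single observation that a product of $s$ generators is either a pure power $N_j^s$ or divisible by $M$, combined with the trivial inclusion $J(G)^{s-2} \subseteq J(G)^{(s-2)}$. The only points demanding care are bookkeeping ones—verifying that $\tfrac{M}{M_a M_b}$ has nonnegative exponents, and fixing the boundary conventions, namely $J(G)^{(0)} = S$ (so that $s = 2$ reads $J(G)^{(2)} = (M) + (N_j^2 \mid j \in [k])$) and treating $s = 1$ directly from the previous lemma.
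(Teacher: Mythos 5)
Your proof is correct, but it takes a genuinely different route from the paper's. You take the identity $J(G)^{(s)}=MJ(G)^{(s-2)}+J(G)^{s}$ (from \cite{DrabGue} for $k\geq 3$, checked directly for $k=2$) as the starting point and then replace the summand $J(G)^{s}$ by $(N_j^s\mid j\in[k])$ modulo $MJ(G)^{(s-2)}$, using the factorization $N_aN_b=M\cdot\tfrac{M}{M_aM_b}$ for $a\neq b$ to show that every product of $s$ generators is either a pure power $N_j^s$ or lies in $MJ(G)^{s-2}\subseteq MJ(G)^{(s-2)}$; this is a sound argument, and your handling of the $k=2$ case and of the convention $J(G)^{(0)}=S$ is fine. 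The paper, by contrast, does not actually use the \cite{DrabGue} identity in its proof (it is quoted only as motivation): it argues directly from Remark \ref{symbCon}. For a monomial $u\in J(G)^{(s)}$ one has $\deg_{\mathfrak{p}_e}(u)\geq s$ for every edge $e$; if $M\mid u$ then, since $\deg_{\mathfrak{p}_e}(M)=2$, the cofactor $u/M$ satisfies the degree condition for $s-2$ and lies in $J(G)^{(s-2)}$, while if $M\nmid u$ then some variable, say $x_{1,1}$, does not divide $u$, and the edges $\{x_{1,1},x_{i,j}\}$ with $i\geq 2$ force $x_{i,j}^s\mid u$ for all such $i,j$, i.e.\ $N_1^s\mid u$. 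The paper's approach buys self-containedness --- only the elementary degree criterion for symbolic powers of squarefree monomial ideals is needed, uniformly in $k$ --- whereas yours buys a reduction to a purely multiplicative observation about the generators at the cost of importing the nontrivial external identity.
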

\begin{proof} For $e\in E(G)$, $\mathfrak{p}_e$ denote an ideal generated by end points of $e$. Let $u$ be a monomial $J(G)^{(s)}$. By Remark \ref{symbCon}, for every $e\in E(G)$, we have $\deg_{\mathfrak{p}_e}(u)\geq s$. Note that $\deg_{\mathfrak{p}_e}(M)=2$. If $u=Mv$, then $\deg_{\mathfrak{p}_e}(v)\geq s-2$ which implies that $v\in J(G)^{(s-2)}$.
	%and  for all $e\in E(G)$, we get that 
	%$\deg_{e}(v)\geq s-2$, and hence we get $v\in J(G)^{(s-2)}$.
	
	Suppose $M \nmid u$. Then there exists $x_{i,j}$ such that $x_{i,j}$ does not 
	divide $u$. Without loss of generality, we may assume that $x_{1,1} \nmid u$.
	Since $x_{1,1}$ is adjacent to $x_{i,j}$ for $i\geq 2$ and for all  $j$, by Remark \ref{symbCon}, $N_1^s$ divides $u$. Thus, we have $J(G)^{(s)}\subset MJ(G)^{(s-2)}+
	\left( N_j^s: j\in[k]\right)$. Clearly, $\left( N_j^s: j\in[k]\right)\subset J(G)^{(s)}$. It follows from Remark \ref{symbCon} that $MJ(G)^{(s-2)} \subset J(G)^{(s)}$
	% Therefore $MJ(G)^{(s-2)}+\left( N_j^s: j\in[k]\right)
	%\subset J(G)^{(s)}$. Hence the lemma follows.
	which completes the proof.
\end{proof}

For a monomial ideal $I=(m_1,\ldots,m_r)$, let $I^{[s]}$ denote an ideal generated by
$m_1^s,\ldots,m_r^s$. 
The following lemma plays a crucial role to compute the regularity of $J(G)^{(s)}$.
\begin{lemma}\label{C6.4} Let $G=K_{p_1,\dots, p_k}$ with the notation as in \ref{multi-nota}. Then
	\begin{enumerate}[\rm i)]
		\item $J(G)^{(s)}:M=J(G)^{(s-2)}$ for $s\geq 2$.
		\item $(J(G)^{(s)},M)=(J(G)^{[s]},M)$ for $s\geq 1$.
		\item $(N_1^s,\dots,N_{i-1}^s):N_i^s=(M_i^s)$ for $2\leq i\leq k$.
		\item $J(G)^{[s]}:M=J(G)^{[s-1]}$ for $s\geq 2$.
	\end{enumerate}
\end{lemma}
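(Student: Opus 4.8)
The plan is to prove each of the four parts of Lemma~\ref{C6.4} by combining the explicit description of the symbolic power from Lemma~\ref{symbolicGen} with the squarefree-support structure of the generators $N_i$ and $M_i$. The key structural fact I would keep in mind throughout is that $M=M_1M_2\cdots M_k$ is the product of all variables, $N_i=M/M_i$, and the supports $\operatorname{supp}(M_i)$ partition $V(G)$ into the parts $V_1,\ldots,V_k$; in particular $M_i$ and $N_i$ have complementary squarefree support, and $N_iM_i=M$.

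For part (iv), which I would prove first since it is purely a monomial-ideal computation independent of the symbolic machinery, I would use the general colon rule for powers of monomial ideals generated by pairwise ``coprime-support'' elements. Since $J(G)^{[s]}=(N_1^s,\ldots,N_k^s)$, a monomial $u$ lies in $J(G)^{[s]}:M$ iff $uM\in J(G)^{[s]}$, i.e.\ $N_i^s\mid uM$ for some $i$. Writing $uM=u\cdot N_iM_i$ and using that $N_i$ and $M_i$ have disjoint support, $N_i^s\mid uN_iM_i$ forces $N_i^{s-1}\mid uM_i$, and since $\gcd(N_i,M_i)=1$ this gives $N_i^{s-1}\mid u$; the reverse containment is immediate. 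This yields $J(G)^{[s]}:M=J(G)^{[s-1]}$. For part (iii) I would argue similarly: a monomial $v$ lies in $(N_1^s,\ldots,N_{i-1}^s):N_i^s$ iff $N_j^s\mid vN_i^s$ for some $j<i$; comparing exponents variable-by-variable on the complementary supports, the constraint on the variables in $\operatorname{supp}(M_i)$ (where $N_i$ has no support but $N_j$ does, since $j\neq i$) forces exactly $M_i^s\mid v$, giving equality $(M_i^s)$.

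For parts (i) and (ii) I would feed in Lemma~\ref{symbolicGen}, namely $J(G)^{(s)}=MJ(G)^{(s-2)}+(N_1^s,\ldots,N_k^s)=MJ(G)^{(s-2)}+J(G)^{[s]}$. For (i), colon by $M$: since $M$ divides every generator of $MJ(G)^{(s-2)}$, that summand contributes exactly $J(G)^{(s-2)}$, while by part (iv) the summand $J(G)^{[s]}$ contributes $J(G)^{[s-1]}$, which is already inside $J(G)^{(s-2)}$ because $N_i^{s-1}\in J(G)^{(s-1)}\subseteq J(G)^{(s-2)}$; hence the colon collapses to $J(G)^{(s-2)}$. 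For (ii), I would show $(J(G)^{(s)},M)=(J(G)^{[s]},M)$ by observing that modulo $M$ the term $MJ(G)^{(s-2)}$ disappears, so $(J(G)^{(s)},M)=(MJ(G)^{(s-2)}+J(G)^{[s]},M)=(J(G)^{[s]},M)$, using Lemma~\ref{symbolicGen} for the first equality and $MJ(G)^{(s-2)}\subseteq(M)$ for the second.

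The step I expect to be the main obstacle is the careful exponent bookkeeping in (iii) and (iv): one must track, for each variable, whether it lies in $\operatorname{supp}(M_i)$ or its complement, and correctly read off that divisibility of the ``mixed'' monomial $uM$ (resp.\ $vN_i^s$) forces the stated pure power. The danger is an off-by-one error in the passage from $N_i^s\mid uM$ to $N_i^{s-1}\mid u$, so I would state the coprimality $\gcd(N_i,M_i)=1$ explicitly and justify the reverse inclusions separately to make the two-way containment airtight. Once the support disjointness is exploited cleanly, the rest is routine, and parts (i)--(ii) then follow formally from Lemma~\ref{symbolicGen} together with (iv).
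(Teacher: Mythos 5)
Your proposal is correct, and for parts (ii)--(iv) it is essentially the paper's own argument: (iii) and (iv) are proved by exactly the same support/exponent comparison (using $M=N_iM_i$ with $\gcd(N_i,M_i)=1$, and that $M_i\mid N_j$ for $j\neq i$), and (ii) is read off from Lemma \ref{symbolicGen} in the same way. The one genuine divergence is part (i): the paper simply cites an external result (Lemma 3.4 of \cite{Fakhari17}), whereas you derive it internally by applying $(-):M$ to the decomposition $J(G)^{(s)}=MJ(G)^{(s-2)}+J(G)^{[s]}$, using that colon by a monomial distributes over sums of monomial ideals, that $MJ(G)^{(s-2)}:M=J(G)^{(s-2)}$, and that the extra term $J(G)^{[s-1]}$ is absorbed since $N_i^{s-1}\in J(G)^{s-1}\subseteq J(G)^{(s-2)}$. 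That derivation is valid and has the advantage of making the lemma self-contained, at the cost of making (i) logically depend on Lemma \ref{symbolicGen} and on part (iv); the paper's citation keeps (i) independent of the generator description. Only a cosmetic remark: in (ii) the case $s=1$ should be noted separately (it is just $J(G)=J(G)^{[1]}$), since Lemma \ref{symbolicGen} only makes sense for $s\geq 2$.
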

\begin{proof}
	(i) and (ii) follow from \cite[Lemma 3.4]{Fakhari17} and Lemma \ref{symbolicGen}, respectively.\\
	(iii) Clearly $(M_i^s)\subset (N_1^s,\dots,N_{i-1}^s):N_i^s$.
	Let $u$ be a monomial in $(N_1^s,\dots,N_{i-1}^s):N_i^s$.
	% Conversely, let $u\in (N_1^s,\dots,N_{i-1}^s):N_i^s$. 
	This forces that for some $1\leq j<i$, $N_j^s\mid uN_i^s$ which implies that $M_i^s \mid u$. \\
	(iv) If $u \in J(G)^{[s]}:M$, then there exists $i$ such that $N_i^s\mid  uM$ and so $N_i^{s-1} \mid u$.
	%Let $u\in J(G)^{[s]}:M$. Then $uM\in J(G)^{[s]}$. This implies that for some $i$, $N_i^s$ divides $uM$ and so $N_i^{s-1} divides$ $u$. $\Rightarrow u\in J(G)^{[s-1]}$. 
	On the other side, since  $N_i\mid M$ for all $i$, we get $N_i^s\mid MN_i^{s-1}$.%then $N_i^s M_i\in J(G)^s$. Therefore $N^{s-1}\in J(G)^{[s]}:M$.
\end{proof}
For fixed $s\ge 2$ and $j\in [k]$, we associate  an ideal $I_{s,j}=\left(M, N_1^s,\ldots,N_j^s \right)$.
Now, we compute the regularity of $I_{s,j}$ in terms of $s$ and $p_j$, which helps to compute the regularity of $J(G)^{(s)}$.

\begin{lemma}\label{regLem} For fixed $s\geq 2$ and $j\in [k]$, $\reg\left(I_{s,j}\right)=s(n-p_j)+p_j-1.$
\end{lemma}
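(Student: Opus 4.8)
The plan is to induct on $j$, peeling off the generators $N_j^s$ one at a time and reading the regularity off the short exact sequence
\begin{equation*}
0 \longrightarrow \frac{S}{I_{s,j-1}:N_j^s}(-s(n-p_j)) \xrightarrow{\ \cdot N_j^s\ } \frac{S}{I_{s,j-1}} \longrightarrow \frac{S}{I_{s,j}} \longrightarrow 0 ,
\end{equation*}
where I adopt the convention $I_{s,0}=(M)$ so that $I_{s,j}=(I_{s,j-1},N_j^s)$ for all $j\ge 1$. The first task is to identify the colon ideal. Since colon by a single monomial distributes over sums of monomial ideals, I would split $I_{s,j-1}:N_j^s=\big((M):N_j^s\big)+\big((N_1^s,\dots,N_{j-1}^s):N_j^s\big)$. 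The factorization $M=M_jN_j$ with $\supp(M_j)\cap\supp(N_j)=\emptyset$ gives $(M):N_j^s=(M_j)$, while Lemma \ref{C6.4}(iii) gives $(N_1^s,\dots,N_{j-1}^s):N_j^s=(M_j^s)\subseteq(M_j)$. Hence the colon is exactly $(M_j)$, and the left-hand term is $A:=\tfrac{S}{(M_j)}(-s(n-p_j))$, a shifted hypersurface quotient with $\reg(A)=s(n-p_j)+p_j-1$ and projective dimension one. Throughout I use $\reg(I_{s,j})=\reg(S/I_{s,j})+1$, so it suffices to compute $\reg(S/I_{s,j})=s(n-p_j)+p_j-2$.

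The upper bound is the routine part. Recording $\reg(S/(M))=n-1$, the standard estimate for the sequence gives $\reg(S/I_{s,j})\le\max\{\reg(A)-1,\ \reg(S/I_{s,j-1})\}$. For $j\ge 2$ the inductive value $\reg(S/I_{s,j-1})=s(n-p_{j-1})+p_{j-1}-2$ is at most $\reg(A)-1=s(n-p_j)+p_j-2$, since their difference equals $(s-1)(p_{j-1}-p_j)\ge 0$; this is exactly where $s\ge 2$ and the ordering $p_{j-1}\ge p_j$ are used. For the base case $j=1$ one instead compares with $n-1$, and $(s-1)(n-p_1)\ge 1$ (using $n-p_1\ge 1$, which holds because $k\ge 2$) again makes $\reg(A)-1$ dominant. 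In all cases this yields $\reg(S/I_{s,j})\le s(n-p_j)+p_j-2$.

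The main obstacle is the matching lower bound, because when $p_{j-1}=p_j$ one has $\reg(A)-1=\reg(S/I_{s,j-1})$, so I cannot invoke any "strictly dominant end term" principle to force the regularity to be attained. Instead I would argue at the level of graded Betti numbers via the long exact sequence of $\Tor^S(\cdot,\mathbb{K})$. Set $d=s(n-p_j)+p_j$. Because $A$ has a two-term resolution, $\Tor_1^S(A,\mathbb{K})$ is one-dimensional and concentrated in degree $d$, so $\Tor_1^S(A,\mathbb{K})_d\neq 0$. The key point is that $\Tor_1^S(S/I_{s,j-1},\mathbb{K})_d=0$: its nonzero graded pieces sit in degrees at most $\reg(S/I_{s,j-1})+1$, and a one-line computation gives $d-\big(\reg(S/I_{s,j-1})+1\big)=(s-1)(p_{j-1}-p_j)+1\ge 1$, so $d$ strictly exceeds that bound in every case, including $p_{j-1}=p_j$ (for the base case $j=1$ one uses that $\Tor_1^S(S/(M),\mathbb{K})$ lives only in degree $n$, and $d-n=(s-1)(n-p_1)\ge 1$). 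Exactness of
\begin{equation*}
\Tor_2^S\!\left(\frac{S}{I_{s,j}},\mathbb{K}\right)_d \longrightarrow \Tor_1^S(A,\mathbb{K})_d \longrightarrow \Tor_1^S\!\left(\frac{S}{I_{s,j-1}},\mathbb{K}\right)_d = 0
\end{equation*}
then forces $\Tor_2^S(S/I_{s,j},\mathbb{K})_d\neq 0$, i.e.\ $\beta_{2,d}(S/I_{s,j})\neq 0$, whence $\reg(S/I_{s,j})\ge d-2=s(n-p_j)+p_j-2$. Combining with the upper bound gives equality, and translating back through $\reg(I_{s,j})=\reg(S/I_{s,j})+1$ yields $\reg(I_{s,j})=s(n-p_j)+p_j-1$, completing the induction.
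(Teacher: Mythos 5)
Your proof is correct and follows essentially the same route as the paper: the same filtration starting from $(M)$ and adjoining $N_1^s,\dots,N_j^s$ one at a time, the same short exact sequences, and the same identification of the colon ideals as $(M_j)$ (which you in fact justify more explicitly than the paper does). The only divergence is your hand-rolled $\Tor$ argument for the lower bound, which is an unnecessary detour: the standard principle the paper cites ([HTT, Lemma 1.2(v)]: in $0\to A\to B\to C\to 0$, if $\reg(A)>\reg(B)$ then $\reg(C)=\reg(A)-1$) requires only $\reg(A)>\reg(S/I_{s,j-1})$, i.e.\ $\reg(A)-1\geq\reg(S/I_{s,j-1})$, and this holds (with equality) even in the boundary case $p_{j-1}=p_j$, so the obstacle you describe comes from misreading the hypothesis as the strict inequality $\reg(A)-1>\reg(B)$.
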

\begin{proof}
	We prove the assertion by induction on  $j$. Suppose $j=1$, consider the exact sequence
	\[
	0\longrightarrow \dfrac{S}{M:N_1^s}(-s(n-p_1)) \longrightarrow
	\dfrac{S}{ M }\longrightarrow \dfrac{S}{\left( M,N_1^s\right)}\longrightarrow 0. 
	\]
	Note that $\reg \left(  M \right)=n$  and 
	$\left( M:N_1^s\right)= (M_1)$. 
	Therefore,
	$$\reg\left(\left( M:N_1^s\right) (-s(n-p_1))\right)=p_1+s(n-p_1).$$ 
	Since $s \geq 2$, it follows from \cite[Lemma 1.2 (v)]{HTT} that
	$\reg\left( M,N_1^s \right)=s(n-p_1)+p_1-1.$
	%Assume that $j>1$. Then by induction, for any $i \leq j-1$, 
	%\[
	% \reg\left(I_{s,i}\right)=s(n-p_i)+p_i-1.
	%\]
	Consider the exact sequence
	\[
	0\longrightarrow \dfrac{S}{I_{s,j-1}:N_j^s}(-s(n-p_j)) 
	\longrightarrow
	\dfrac{S}{I_{s,j-1} }\longrightarrow \dfrac{S}{I_{s,j}}\longrightarrow 0. 
	\]
	Note that $(I_{s,j-1}:N_j^s)= (M_j)$, and 
	hence $\reg\left(\left(I_{s,j-1}:N_j^s\right)(-s(n-p_j))\right)=s(n-p_j)+p_j$.
	By induction, $\reg\left(I_{s,j-1}\right)=s(n-p_{j-1})+p_{j-1}-1.$
	Hence by \cite[Lemma 1.2 (v)]{HTT}, we get 
	$
	\reg\left(I_{s,j}\right)=s(n-p_j)+p_j-1.
	$
\end{proof}
We now compute the regularity of $J(G)^{(s)}.$ Since complete multipartite graph is a matroid, there is another way to compute the regularity of $J(G)^{(s)}$, see \cite[Theorem 4.5]{MT17}. We have provided here an elementary proof so
that the result is accessible to readers who are not familiar with matroid.
\begin{theorem}\label{powerMultiThm}
	Let $G=K_{p_1,\dots,p_k}$ with the notation as in \ref{multi-nota}.
	Then for all $s \geq 1$, $$\reg\left(J(G)^{(s)} \right)=s\cdot \deg(J(G))+p_k-1.$$
\end{theorem}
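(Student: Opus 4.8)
The plan is to pass to the cyclic module and prove, by induction on $s$, that
\[
\reg\left(\frac{S}{J(G)^{(s)}}\right)=s(n-p_k)+p_k-2 ,
\]
which is equivalent to the asserted formula since $\reg(J(G)^{(s)})=\reg(S/J(G)^{(s)})+1$ and $\deg(J(G))=n-p_k$. Because Lemma \ref{C6.4}(i) relates $J(G)^{(s)}$ to $J(G)^{(s-2)}$, the induction proceeds in steps of two, so I must supply two independent base cases, $s=1$ and $s=2$. The whole argument works uniformly for all $k\geq 2$, so the bipartite case $k=2$ needs no separate treatment.

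The engine of the induction is the short exact sequence
\[
0\longrightarrow \frac{S}{J(G)^{(s)}:M}(-n)\longrightarrow \frac{S}{J(G)^{(s)}}\longrightarrow \frac{S}{(J(G)^{(s)},M)}\longrightarrow 0 .
\]
By Lemma \ref{C6.4}(i) the left-hand module is $(S/J(G)^{(s-2)})(-n)$, and by Lemma \ref{C6.4}(ii) the right-hand module is $S/(J(G)^{[s]},M)=S/I_{s,k}$. Lemma \ref{regLem} then evaluates the right-hand regularity as $\reg(S/I_{s,k})=s(n-p_k)+p_k-2$ for $s\geq 2$, so everything reduces to comparing this value with the regularity of the shifted left-hand term. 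For the inductive step ($s\geq 3$) the hypothesis gives $\reg\big((S/J(G)^{(s-2)})(-n)\big)=(s-1)(n-p_k)+2p_k-2$, whereas $\reg(S/I_{s,k})=s(n-p_k)+p_k-2$; their difference is $n-2p_k$, which is nonnegative because $n=p_1+\cdots+p_k\geq p_1+p_k\geq 2p_k$ when $k\geq 2$. Thus the $I_{s,k}$-term dominates, and the standard estimates for the regularity of the three terms of a short exact sequence (in both the $\reg(\mathrm{middle})\leq\max$ and $\reg(\mathrm{quotient})\leq\max$ directions) force $\reg(S/J(G)^{(s)})=\reg(S/I_{s,k})$, the claimed value.

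It remains to settle the base cases. For $s=2$, Lemma \ref{C6.4}(i) gives $J(G)^{(2)}:M=J(G)^{(0)}=S$, so the left-hand term of the sequence vanishes, $S/J(G)^{(2)}\cong S/I_{2,k}$, and the value follows at once from Lemma \ref{regLem}. For $s=1$ the sequence degenerates (here $M\in J(G)$), and instead I would run the elementary filtration $J_i=(N_1,\dots,N_i)$ with $J_k=J(G)$: by Lemma \ref{C6.4}(iii) with $s=1$ one has $(N_1,\dots,N_{i-1}):N_i=(M_i)$, so each step fits in a short exact sequence whose kernel $(S/(M_i))(-(n-p_i))$ has regularity $(p_i-1)+(n-p_i)=n-1$. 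Starting from $\reg(S/J_1)=n-p_1-1\leq n-2$ and using both directions of the short-exact-sequence estimates, one checks inductively that $\reg(S/J_i)=n-2$ for all $i\geq 2$, giving $\reg(J(G))=n-1$ as required.

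The point that needs genuine care throughout is the \emph{tightness} of the short-exact-sequence estimates: the mere upper bound is never enough, and in the main step the equality hinges on the strict domination guaranteed by $n\geq 2p_k$, while in the $s=1$ filtration it hinges on the kernel regularity $n-1$ exceeding the running value $n-2$ by exactly one. I expect this bookkeeping to be the only delicate part; all the structural inputs—the reductions of the relevant colon and sum ideals and the regularity of $I_{s,j}$—are already furnished by Lemmas \ref{symbolicGen}, \ref{C6.4} and \ref{regLem}.
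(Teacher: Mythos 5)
Your proposal is correct and follows essentially the same route as the paper: the same short exact sequence $0\to (S/(J(G)^{(s)}{:}M))(-n)\to S/J(G)^{(s)}\to S/(J(G)^{(s)},M)\to 0$, combined with Lemma \ref{C6.4}, Lemma \ref{symbolicGen} and Lemma \ref{regLem}, with the inductive step settled by the comparison $n\geq 2p_k$. The only difference is that you supply self-contained base cases (a filtration via Lemma \ref{C6.4}(iii) for $s=1$, and the identification $J(G)^{(2)}=I_{2,k}$ for $s=2$), where the paper cites \cite[Theorem 5.3.8]{PhDT103J} for $s=1$ and leaves $s=2$ implicit; your extra care here is sound and, if anything, tightens the write-up.
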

\begin{proof}
	We prove the result by induction on $s$. If  $s=1$, then the result follows from \cite[Theorem 5.3.8]{PhDT103J}. Assume that $s>1$. Consider the following exact sequence:
	\begin{equation}\label{cmpMltSES}
	0\longrightarrow \dfrac{S}{J(G)^{(s)}:M}(-n)\longrightarrow \dfrac{S}{J(G)^{(s)}}\longrightarrow 
	\dfrac{S}{(J(G)^{(s)}, M)}\longrightarrow0.
	\end{equation}
	By Lemma \ref{C6.4}(i), $J(G)^{(s)}:M=J(G)^{(s-2)}$ and by induction $$\reg\left(\left(J(G)^{(s)}:M\right)(-n)\right)=(s-2)\cdot \deg(J(G))+n+p_k-1. $$
	It follows from Lemma \ref{symbolicGen} that $(J(G)^{(s)},M)=I_{s,k}$. Now, by  Lemma \ref{regLem}, 
	we get $$\reg\left(J(G)^{(s)}, M\right)=s\cdot \deg(J(G))+p_k-1.$$ 
	Hence the assertion follows \cite[Lemma 1.2]{HTT}. %\footnote{reference needed}.
\end{proof}
It follow from  Theorem \ref{powerMultiThm} that if $p_k=1$, then the free constant $b=0$. 
%In particular, we recover the result of Fakhari (\cite[Corollary 3.8]{Seyed}).
\begin{corollary}
	Let $G$ be a complete graph on $n$ vertices. Then, for all $s \geq 1,$
	$$\reg\left(J(G)^{(s)}\right)=s\cdot \deg(J(G))=s(n-1).$$
\end{corollary}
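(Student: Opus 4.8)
The plan is to recognize the complete graph $K_n$ as a special instance of the complete multipartite graphs handled by Theorem \ref{powerMultiThm}, and then to read off the regularity formula directly. First I would observe that $K_n$ is precisely the complete multipartite graph $K_{p_1,\dots,p_k}$ in which every part is a singleton, that is, $k=n$ and $p_1=\dots=p_k=1$. Indeed, when each part $V_i$ consists of a single vertex, every pair of distinct vertices lies in distinct parts, so the edge set $\bigcup_{i\neq j}\{\{x,y\}:x\in V_i,\,y\in V_j\}$ is the set of all pairs of vertices, which is exactly $E(K_n)$. This identification is compatible with the normalization $p_1\geq\dots\geq p_k\geq 1$ and the hypothesis $k\geq 2$ (valid for $n\geq 2$) imposed in Notation \ref{multi-nota}.

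With this identification, I would compute the single combinatorial input needed. Since $p_k=1$, the degree formula for cover ideals of complete multipartite graphs gives $\deg(J(G))=n-p_k=n-1$. Substituting $p_k=1$ into Theorem \ref{powerMultiThm} then yields
\[
\reg\left(J(G)^{(s)}\right)=s\cdot\deg(J(G))+p_k-1=s(n-1)+1-1=s(n-1)
\]
for all $s\geq 1$, which is the claimed equality.

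There is no real obstacle beyond the combinatorial identification, as the corollary is a direct specialization of Theorem \ref{powerMultiThm} to the boundary case $p_k=1$; this is exactly the situation in which the free constant $b=p_k-1$ vanishes. The only points worth verifying are the hypotheses of Notation \ref{multi-nota}, namely $p_1\geq\dots\geq p_k\geq 1$ and $k\geq 2$, both of which hold automatically once each part is taken to be a singleton and $n\geq 2$. Once these are confirmed, the substitution above completes the argument.
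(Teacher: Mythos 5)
Your proposal is correct and follows exactly the paper's (implicit) argument: the paper states just before the corollary that it follows from Theorem \ref{powerMultiThm} by taking $p_k=1$, i.e., viewing $K_n$ as the complete multipartite graph with all singleton parts, which is precisely your specialization. The identification of $K_n$ with $K_{1,\dots,1}$ and the substitution $\deg(J(G))=n-1$, $p_k-1=0$ are all that is needed.
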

\subsection{Hilbert series}
In this subsection, we compute the Hilbert series of
symbolic powers of $J(G)$ in terms of 
number of vertices and cardinality of partition.
To accomplish this, we  first study the Hilbert series of 
$\frac{S}{J(G)^{[s]}}$ for all $s \geq 1$.

\begin{proposition}\label{C6.5}
	Let $G=K_{p_1,\dots, p_k}$ with the notation as in \ref{multi-nota}. Then, for all $s \geq 1,$
	\[
	H\left(\frac{S}{J(G)^{[s]}},t\right) = \frac{1- \displaystyle\sum_{i=1}^{k} t^{s(n-p_i)}+(k-1)t^{sn}}{(1-t)^n}.
	\]
	%In particular, $e\left(\dfrac{S}{J(G)^{[s]}}\right)=\dfrac{s^2}{2}\left(n^2-\sum\limits_{i=1}^{k}p_i^2\right).$
\end{proposition}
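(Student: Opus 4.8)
The plan is to compute the Hilbert series by introducing the generators $N_1^s,\dots,N_k^s$ of $J(G)^{[s]}$ one at a time and tracking how the series changes at each step through a short exact sequence. For $0\le i\le k$ set $L_i=(N_1^s,\dots,N_i^s)$, so that $L_0=(0)$ and $L_k=J(G)^{[s]}$. Since $\deg(N_i^s)=s(n-p_i)$, adjoining the generator $N_i^s$ fits into
\[
0\longrightarrow \frac{S}{L_{i-1}:N_i^s}\bigl(-s(n-p_i)\bigr)\longrightarrow \frac{S}{L_{i-1}}\longrightarrow \frac{S}{L_i}\longrightarrow 0,
\]
which yields the recursion
\[
H\left(\frac{S}{L_i},t\right)=H\left(\frac{S}{L_{i-1}},t\right)-t^{s(n-p_i)}\,H\left(\frac{S}{L_{i-1}:N_i^s},t\right).
\]

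Next I would evaluate the two types of colon ideal appearing on the right. The base step $i=1$ is special: here $L_0=(0)$, so $L_0:N_1^s=(0)$ and $H(S/(L_0:N_1^s),t)=1/(1-t)^n$, whence $H(S/L_1,t)=(1-t^{s(n-p_1)})/(1-t)^n$. For every $i\ge 2$, Lemma \ref{C6.4}(iii) gives $L_{i-1}:N_i^s=(M_i^s)$, a principal ideal generated in degree $sp_i$, so $H(S/(L_{i-1}:N_i^s),t)=(1-t^{sp_i})/(1-t)^n$. Substituting these into the recursion and telescoping from $i=1$ up to $i=k$, and using $t^{s(n-p_i)}\cdot t^{sp_i}=t^{sn}$ to simplify each summand, produces
\[
H\left(\frac{S}{J(G)^{[s]}},t\right)=\frac{1-t^{s(n-p_1)}-\sum_{i=2}^{k}\bigl(t^{s(n-p_i)}-t^{sn}\bigr)}{(1-t)^n}.
\]
Collecting the $(k-1)$ copies of $t^{sn}$ and merging the $i=1$ term into the sum over all $i$ gives exactly the claimed numerator $1-\sum_{i=1}^k t^{s(n-p_i)}+(k-1)t^{sn}$.

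The computation is short once Lemma \ref{C6.4}(iii) is available, and the only genuinely delicate point is the bookkeeping at $i=1$. Because $L_0:N_1^s$ is the zero ideal rather than a principal one, the step $i=1$ contributes no compensating $+t^{sn}$, so only the $k-1$ steps with $i\ge 2$ each contribute a copy of $t^{sn}$; this is precisely the origin of the coefficient $k-1$ (rather than $k$) in the final formula, and I expect it to be the main place a careless argument would slip. As an independent check, the same numerator follows from the Taylor complex of $J(G)^{[s]}$: since $\operatorname{lcm}(N_i^s:i\in T)=M^s$ (degree $sn$) for every $T\subseteq[k]$ with $|T|\ge 2$, and equals $N_i^s$ for $T=\{i\}$, the alternating sum $\sum_{T}(-1)^{|T|}t^{\deg\operatorname{lcm}}$ collapses via $\sum_{r=2}^{k}(-1)^r\binom{k}{r}=k-1$ to the same expression.
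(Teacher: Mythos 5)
Your argument is correct and is essentially identical to the paper's proof: both introduce the generators $N_1^s,\dots,N_k^s$ one at a time, invoke Lemma \ref{C6.4}(iii) to identify the colon ideals $(N_1^s,\dots,N_{i-1}^s):N_i^s=(M_i^s)$ for $i\geq 2$, and telescope the resulting short exact sequences to obtain the stated numerator. Your careful handling of the $i=1$ step (and the resulting coefficient $k-1$) matches the paper's computation exactly.
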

\begin{proof}
	By Lemma \ref{C6.4}(iii), $(N_1^s,\dots,N_{i-1}^s):N_i^s=(M_i^s)$ for all  $2\leq i\leq k$. Now, for $2\leq i\leq k$ consider the exact sequences: 
	\begin{eqnarray*}
		0\longrightarrow \frac{S}{(M_i^s)}(-s(n-p_i)) \longrightarrow \frac{S}{(N_1^s,\dots,N_{i-1}^s)} \longrightarrow \frac{S}{(N_1^s,\dots,N_i^s)} \longrightarrow 0.
	\end{eqnarray*}
	We know that
	$H\left(\frac{S}{(M_i^s)},t\right)=\frac{1-t^{sp_i}}{(1-t)^n} \text{ for all
		$2\leq i\leq k$.}$
	Therefore, by applying successively the above short exact sequences, we get
	\begin{eqnarray*}
		H\left(\frac{S}{J(G)^{[s]}},t\right) &=&  H\left(\frac{S}{(N_1^s)},t\right) -\sum_{i=2}^kt^{s(n-p_i)}H\left(\dfrac{S}{(M_i^s)},t\right) %H\left(\frac{S}{(N_1,\dots,N_{k-1})},t\right) 
		\\  &=& \dfrac{1-t^{s(n-p_1)}}{(1-t)^n}% H\left( \frac{S}{(N_1)},t\right)
		-\sum\limits_{i=2}^k\left( \frac{t^{s(n-p_i)}-t^{sn}}{(1-t)^n} \right) %-\left( \frac{t^{n-p_{k-1}}-t^n}{(1-t)^n} \right)-\dots -\left( \frac{t^{n-p_2}-t^n}{(1-t)^n} \right)
		\\ %= \frac{(k-1)t^n-\displaystyle \sum_{i=2}^k t^{n-p_i}+ (1-t^{n-p_1})}{(1-t)^n} 
		&=& \frac{1-\displaystyle \sum_{i=1}^k t^{s(n-p_i)}+(k-1)t^{sn}}{(1-t)^n}.
	\end{eqnarray*}
	%The multiplicity can be calculated by differentiating $2$ times the numerator of the above Hilbert series expression at $1$. 
\end{proof}

To obtain the Hilbert series of $\frac{S}{J(G)^{(s)}}$, we need the following lemma:
\begin{lemma}\label{C6.7}
	Let $G=K_{p_1,\dots, p_k}$ with the notation as in \ref{multi-nota}. Then for all $s \geq 1,$
	\[
	H\left(\frac{S}{(J(G)^{[s]},M)},t\right) = \frac{1-t^n- \displaystyle\sum_{i=1}^{k} t^{s(n-p_i)}+\displaystyle\sum_{i=1}^{k} t^{s(n-p_i)+p_i}}{(1-t)^n}.
	\]
\end{lemma}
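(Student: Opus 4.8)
The plan is to pass from the already-established Hilbert series of $S/J(G)^{[s]}$ in Proposition~\ref{C6.5} to that of $S/(J(G)^{[s]},M)$ by means of a single short exact sequence that quotients by the extra generator $M$. Concretely, I would write down
\[
0\longrightarrow \frac{S}{J(G)^{[s]}:M}(-n)\longrightarrow \frac{S}{J(G)^{[s]}}\longrightarrow \frac{S}{(J(G)^{[s]},M)}\longrightarrow 0,
\]
where the left-hand map is multiplication by $M$ (recall $\deg M=n$, which accounts for the shift $(-n)$). Taking Hilbert series, which are additive on short exact sequences, gives
\[
H\!\left(\frac{S}{(J(G)^{[s]},M)},t\right)=H\!\left(\frac{S}{J(G)^{[s]}},t\right)-t^{\,n}\,H\!\left(\frac{S}{J(G)^{[s]}:M},t\right).
\]
So the whole computation reduces to identifying the colon ideal $J(G)^{[s]}:M$ and plugging in known series.

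The key step is to evaluate $J(G)^{[s]}:M$. Here I would invoke Lemma~\ref{C6.4}(iv), which states $J(G)^{[s]}:M=J(G)^{[s-1]}$ for $s\geq 2$. Thus for $s\geq 2$ the correction term is $t^{\,n}H\!\left(S/J(G)^{[s-1]},t\right)$, and both series on the right are furnished by Proposition~\ref{C6.5}. Substituting the closed forms yields
\[
H\!\left(\frac{S}{(J(G)^{[s]},M)},t\right)=\frac{1-\sum_{i=1}^{k}t^{s(n-p_i)}+(k-1)t^{sn}}{(1-t)^n}-t^{\,n}\cdot\frac{1-\sum_{i=1}^{k}t^{(s-1)(n-p_i)}+(k-1)t^{(s-1)n}}{(1-t)^n},
\]
and after combining over the common denominator $(1-t)^n$ and simplifying the numerator, the exponents should collapse: the term $-t^{\,n}\cdot t^{(s-1)(n-p_i)}=-t^{\,sn-(s-1)p_i}$ and the leading $+t^{\,sn}$ pieces should recombine into $\sum_i t^{s(n-p_i)+p_i}$, while $1-t^{\,n}$ survives intact. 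The $(k-1)t^{sn}$ and the $-t^{\,n}(k-1)t^{(s-1)n}=-(k-1)t^{sn}$ cancel outright, which is a useful consistency check.

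For the base case $s=1$, Lemma~\ref{C6.4}(iv) does not apply, so I would treat it separately: when $s=1$ one has $J(G)^{[1]}=J(G)$, so $(J(G)^{[1]},M)=(J(G),M)=J(G)$ since $M$ is already a multiple of each $N_i$ and indeed $M\in J(G)$; one checks the asserted formula directly against Proposition~\ref{C6.5} at $s=1$ (the displayed right-hand side at $s=1$ should agree with $H(S/J(G)^{[1]},t)$ because the added $M$ term is redundant). The main obstacle I anticipate is purely bookkeeping: making sure every exponent in the numerator is tracked correctly through the subtraction so that the final numerator reads exactly $1-t^n-\sum_{i}t^{s(n-p_i)}+\sum_{i}t^{s(n-p_i)+p_i}$, with no stray cross terms surviving. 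This is routine algebra with no conceptual difficulty, since the structural input (the colon computation and the additivity of Hilbert series) is already in hand.
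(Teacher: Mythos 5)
Your proposal is correct and follows essentially the same route as the paper: the short exact sequence induced by multiplication by $M$, the identification $J(G)^{[s]}:M=J(G)^{[s-1]}$ from Lemma \ref{C6.4}(iv), and substitution of Proposition \ref{C6.5}. Your separate verification of the base case $s=1$ (where Lemma \ref{C6.4}(iv) does not apply) is in fact slightly more careful than the paper's own write-up, and the algebraic simplification you describe does collapse exactly as claimed since $n+(s-1)(n-p_i)=s(n-p_i)+p_i$.
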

\begin{proof}
	Consider the  short exact sequence: 
	\begin{equation}\label{comMulSES1}
	0\longrightarrow \frac{S}{J(G)^{[s]}:M}(-n) \longrightarrow \frac{S}{J(G)^{[s]}} \longrightarrow \frac{S}{(J(G)^{[s]},M)}\longrightarrow 0.
	\end{equation}
	By Lemma \ref{C6.4}(iv), $J(G)^{[s]}:M=J(G)^{[s-1]}$. Therefore 
	\begin{eqnarray*}
		H\left(\frac{S}{(J(G)^{[s]},M)},t\right)= H\left(\frac{S}{J(G)^{[s]}},t\right)- t^nH\left(\frac{S}{J(G)^{[s-1]}},t\right).
	\end{eqnarray*}
	Using Proposition \ref{C6.5}, we get the result.
	%\begin{eqnarray*}
	%H\left(\frac{S}{(J(G)^{[s]},M)},t\right) &= \frac{1- \displaystyle\sum_{i=1}^{k} t^{s(n-p_i)}+(k-1)t^{sn}}{(1-t)^n} - t^n\left( \frac{1- \displaystyle\sum_{i=1}^{k} t^{(s-1)(n-p_i)}+(k-1)t^{(s-1)n}}{(1-t)^n} \right)
	% \\ &= \frac{1-t^n- \displaystyle\sum_{i=1}^{k} t^{s(n-p_i)}+\displaystyle\sum_{i=1}^{k} t^{s(n-p_i)+p_i}}{(1-t)^n}.
	%\end{eqnarray*}
\end{proof}
We are now ready to establish the Hilbert series of $\dfrac{S}{J(G)^{(s)}}$ for all $s \geq 1$.
\begin{theorem}\label{C6.8}
	Let $G=K_{p_1,\dots, p_k}$ with the notation as in \ref{multi-nota}. 
	Then $H\left(\dfrac{S}{J(G)^{(s)}},t\right)$ is 	
	\[
	\left\{\begin{array}{cc}\dfrac{1-t^{rn}+ \sum\limits_{j=0}^{r-1}\sum\limits_{i=1}^{k} (t^{p_i}-1)\ t^{(s-j)(n-p_i)+jp_i}}{(1-t)^n},
	& \text{ if } s=2r, r\geq 1\\
	\dfrac{1+(k-1)t^{(r+1)n}-  \sum\limits_{i=1}^{k} t^{(n-p_i)+rn} + \sum\limits_{j=0}^{r-1}\sum\limits_{i=1}^{k} (t^{p_i}-1)\ t^{(s-j)(n-p_i)+jp_i}}{(1-t)^n} &\text{ if } s=2r+1, r\geq 0.
	\end{array}\right.
	\]
\end{theorem}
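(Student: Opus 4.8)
The plan is to set up a two-step recurrence in $s$ from the short exact sequence \eqref{cmpMltSES} and then resolve it by induction, the ``step $2$'' being precisely what forces the split into the even and odd cases. Taking Hilbert series along \eqref{cmpMltSES} (additivity on short exact sequences), and feeding in Lemma \ref{C6.4}(i), which gives $J(G)^{(s)}:M=J(G)^{(s-2)}$, together with Lemma \ref{C6.4}(ii), which gives $(J(G)^{(s)},M)=(J(G)^{[s]},M)$, I would first record, for all $s\geq 2$, the identity
\begin{equation*}
H\!\left(\frac{S}{J(G)^{(s)}},t\right)=t^{n}\,H\!\left(\frac{S}{J(G)^{(s-2)}},t\right)+H\!\left(\frac{S}{(J(G)^{[s]},M)},t\right),
\end{equation*}
where the factor $t^{n}$ records the degree shift $(-n)$, since $M$ is the product of all $n$ variables. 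Lemma \ref{C6.7} supplies the last summand in closed form, so the recurrence is fully explicit, and everything reduces to solving it.

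For the base of the induction I would use the two starting values dictated by the parity of $s$. Iterating the recurrence downward from an even $s$ terminates at $J(G)^{(0)}=S$, whence $H(S/J(G)^{(0)},t)=0$; conveniently the claimed even-case numerator also vanishes at $r=0$ (the $t^{rn}$ term cancels the $1$ and the double sum is empty), so I may simply take $r=0$ as the trivial base case. Iterating downward from an odd $s$ terminates at $J(G)^{(1)}=J(G)=J(G)^{[1]}$, whose Hilbert series is Proposition \ref{C6.5} evaluated at $s=1$, and this matches the odd-case formula at $r=0$. It is worth emphasizing that the nonvanishing of this odd base value is exactly what produces the extra terms $1+(k-1)t^{(r+1)n}-\sum_i t^{(n-p_i)+rn}$ distinguishing the odd formula from the even one.

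With the recurrence and base cases in hand, the inductive step is a direct but bookkeeping-heavy computation, and this is where the main work lies. Substituting the inductive hypothesis for $H(S/J(G)^{(s-2)},t)$, multiplying by $t^{n}$, and adding the closed form of $H(S/(J(G)^{[s]},M),t)$ from Lemma \ref{C6.7}, two simplifications occur: the $-t^{n}$ appearing in Lemma \ref{C6.7} cancels the $t^{n}$ coming from the constant term of the hypothesis, and the remaining Lemma \ref{C6.7} terms $\sum_i(t^{p_i}-1)t^{s(n-p_i)}$ become exactly the $j=0$ summand of the target double sum. The crux is the reindexing of the hypothesis's double sum: its exponent $n+(s-2-j)(n-p_i)+jp_i$ rewrites, via $n=(n-p_i)+p_i$, as $(s-1-j)(n-p_i)+(j+1)p_i$, so the shift $j\mapsto j+1$ turns it into precisely the $j\geq 1$ part of the target sum. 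The most error-prone part will be carrying the parity-dependent boundary terms correctly through this reindexing: in particular, checking in the odd case that $t^{n}\cdot(k-1)t^{rn}$ and $t^{n}\cdot\bigl(-\sum_i t^{(n-p_i)+(r-1)n}\bigr)$ reproduce exactly the stated $(k-1)t^{(r+1)n}$ and $-\sum_i t^{(n-p_i)+rn}$. Once these are verified, both parity cases close simultaneously.
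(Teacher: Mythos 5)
Your proposal is correct and follows essentially the same route as the paper: the same short exact sequence \eqref{cmpMltSES} combined with Lemma \ref{C6.4}(i),(ii) to get the two-step recurrence, Lemma \ref{C6.7} for the inhomogeneous term, and induction split by parity with the odd base case given by Proposition \ref{C6.5} at $s=1$. The only cosmetic difference is that you anchor the even case at the trivial $r=0$ value $J(G)^{(0)}=S$ rather than at $r=1$ via $J(G)^{(2)}=(J(G)^{[2]},M)$, which amounts to the same computation, and your reindexing $j\mapsto j+1$ of the double sum checks out.
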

\begin{proof}
	It follows from Lemma \ref{C6.4}(i) and (ii) that for $s\geq 2$, $J(G)^{(s)}:M=J(G)^{(s-2)}$ and $(J(G)^{(s)},M)=(J(G)^{[s]},M).$ Using Equation \eqref{cmpMltSES}, we get
	\begin{eqnarray}\label{addHil}
	H\left(\frac{S}{J(G)^{(s)}},t\right) = t^n H\left(\frac{S}{J(G)^{(s-2)}},t\right) + H\left(\frac{S}{(J(G)^{[s]},M)},t\right).
	\end{eqnarray}
	
	Suppose $s=2r$.
	We prove this by induction on $r$. 
	If $r=1$, then, by Lemma \ref{symbolicGen}, $J(G)^{(2)}=(J(G)^{[2]},M)$. Now the result follows from 
	Lemma \ref{C6.7}. % follows from the Lemma \ref{C6.7}.% for the value $s=2$.
	Assume that $r \geq 2$.
	Now by induction and Lemma \ref{C6.7}, we get the assertion.
		
	Suppose $s=2r+1$. By Lemma \ref{C6.5}, the result holds for $r=0$. Now assume that $r \geq 1$. The assertion follows from induction, Lemma \ref{C6.7} and Equation \eqref{addHil}.
\end{proof}

\section{Multiplicity}\label{multiplicity}
In this section, we study the multiplicity of symbolic powers of cover ideals and edge ideals.
The following lemma is probably well-known. We include it for the sake of completeness.
\begin{lemma}\label{multLem}
	Let $I$ be minimally generated by $h$ linear forms. Then  $e\left(\dfrac{S}{I^s}\right)=\displaystyle \binom{s+h-1}{h}.$ 
\end{lemma}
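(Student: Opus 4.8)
The plan is to reduce to the monomial case by a linear change of coordinates and then read the multiplicity directly off the Hilbert series. Since $I$ is generated by $h$ (linearly independent) linear forms, there is a graded $\mathbb{K}$-algebra automorphism of $S$ carrying $I$ to $(x_1,\dots,x_h)$; as such an automorphism respects the grading, it preserves the Hilbert function, and hence the dimension and the multiplicity of $S/I^s$. Thus I may assume from the start that $I=(x_1,\dots,x_h)$ and $I^s=(x_1,\dots,x_h)^s$.

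Next I would split the variables, writing $S=\mathbb{K}[x_1,\dots,x_h]\otimes_{\mathbb{K}}\mathbb{K}[x_{h+1},\dots,x_n]$, so that $S/I^s\cong \big(\mathbb{K}[x_1,\dots,x_h]/\mathfrak{n}^s\big)\otimes_{\mathbb{K}}\mathbb{K}[x_{h+1},\dots,x_n]$, where $\mathfrak{n}=(x_1,\dots,x_h)$. The Hilbert series then factors as a product of the Hilbert series of the two tensor factors. The first factor is that of the Artinian ring $\mathbb{K}[x_1,\dots,x_h]/\mathfrak{n}^s$, whose degree-$k$ component has dimension equal to the number of monomials of degree $k$ in $h$ variables, namely $\binom{k+h-1}{h-1}$ for $0\le k\le s-1$ and $0$ otherwise; the second factor is $1/(1-t)^{n-h}$. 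Hence
\[
H\!\left(\frac{S}{I^s},t\right)=\frac{\displaystyle\sum_{k=0}^{s-1}\binom{k+h-1}{h-1}t^k}{(1-t)^{n-h}}.
\]
The denominator exhibits $\dim(S/I^s)=n-h$ (consistent with $\hgt I=h$), and since the numerator is strictly positive at $t=1$ it has no factor of $(1-t)$; therefore it is precisely the $h$-polynomial $h_{S/I^s}(t)$. By definition of multiplicity, $e(S/I^s)=h_{S/I^s}(1)=\sum_{k=0}^{s-1}\binom{k+h-1}{h-1}$.

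Finally I would evaluate this sum by the hockey-stick identity $\sum_{k=0}^{s-1}\binom{k+h-1}{h-1}=\binom{s+h-1}{h}$ (reindexing $j=k+h-1$ and telescoping), which yields the claimed value. There is no serious obstacle here: the only genuine content is the combinatorial identity, and the only point requiring care is the reduction step, where the hypothesis must be read as $h$ \emph{linearly independent} linear forms so that the coordinate change is available and $h=\hgt I$; if the forms were dependent the stated formula would fail, so this reading is forced.
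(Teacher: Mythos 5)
Your proof is correct. It takes a somewhat different route from the paper's: the paper picks a variable $x\in\mathfrak{m}\setminus I$, observes it is a nonzerodivisor on $S/I^s$ so that passing to $S/(I^s,x)$ multiplies the Hilbert series by $(1-t)$ and preserves the multiplicity, and iterates until $I$ becomes the maximal ideal of a polynomial ring in $h$ variables, where $e(S/I^s)=\dim_{\mathbb{K}}(S/\mathfrak{m}^s)$; you instead normalize coordinates once, factor $S/I^s$ as a tensor product, and write down the Hilbert series explicitly, reading off $e$ as the value of the $h$-polynomial at $t=1$. Both arguments terminate in the same count of monomials of degree at most $s-1$ in $h$ variables, namely $\binom{s+h-1}{h}$. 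Your version buys explicitness (you get the full Hilbert series of $S/I^s$, not just its multiplicity) and sidesteps the small point the paper leaves implicit, that a variable outside $I$ is regular on $S/I^s$ because $I^s$ is $I$-primary; the paper's version buys brevity and avoids the coordinate change. Your closing caveat that the $h$ linear forms must be linearly independent (so that $\hgt I=h$) is well taken --- the paper tacitly assumes the same, and the formula is false otherwise.
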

\begin{proof}
	Let $x\in \mathfrak{m}\setminus I$ be a generator of $\mathfrak{m}$,
	where $\mathfrak{m}=(x_1,\ldots,x_n)$ is the unique homogeneous maximal ideal in $S$.
	 Then $x$ is a regular element on $I$, and so on $I^s$. This gives  $H\left(\dfrac{S}{(I^s,x)},t\right)=(1-t)H\left(\dfrac{S}{I^s},t\right)$, and hence $e\left(\dfrac{S}{(I^s,x)}\right)=e\left(\dfrac{S}{I^s}\right).$ Therefore, without loss of generality, we may assume that $I=\mathfrak{m}$. Thus, we get $e\left(\dfrac{S}{I^s}\right)=\dim_{\mathbb{K}}\left(\dfrac{S}{I^s}\right).$
\end{proof}
\begin{obs}\label{rem-multi}
Let $I$ be a squarefree monomial ideal in $S$.
 Since the minimal associated primes of squarefree monomial ideals
 are generated by subsets of variables, by \cite[Corollary 4.7.8]{Bh1993} and 
Lemma \ref{multLem}, we have $$e\left(\dfrac{S}{I^{(s)}}\right)=\binom{h+s-1}{h}\left|\minh(I)\right|,$$
	where $\minh(I)=\{\mathfrak{p} \in \ass(S/I) : \hgt(\mathfrak{p})=\hgt(I)\}$ and
	$h=\hgt(I)$
\end{obs}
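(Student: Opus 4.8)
The plan is to reduce the computation of $e(S/I^{(s)})$ to its top-dimensional part and to evaluate it through the associativity formula for multiplicities \cite[Corollary 4.7.8]{Bh1993}, with the single length that appears supplied by Lemma \ref{multLem}. Throughout write $h=\hgt(I)$. Since $I$ is a squarefree monomial ideal, $S/I$ is reduced, so $\ass(S/I)$ consists precisely of the minimal primes of $I$, each generated by a subset of the variables, and $I^{(s)}=\bigcap_{\mathfrak{p}\in\ass(S/I)}\mathfrak{p}^s$. As $\sqrt{I^{(s)}}=I$, we have $\dim(S/I^{(s)})=\dim(S/I)=n-h$, and the minimal primes $\mathfrak{p}$ of $S/I^{(s)}$ with $\dim(S/\mathfrak{p})=n-h$ are exactly those with $\hgt(\mathfrak{p})=h$, that is, the elements of $\minh(I)$.

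First I would apply the associativity formula to the graded module $S/I^{(s)}$ of dimension $n-h$, which gives
\[
e\!\left(\frac{S}{I^{(s)}}\right)=\sum_{\mathfrak{p}\in\minh(I)}\length_{S_{\mathfrak{p}}}\!\left(\left(S/I^{(s)}\right)_{\mathfrak{p}}\right)\cdot e(S/\mathfrak{p}).
\]
Each $\mathfrak{p}\in\minh(I)$ is generated by $h$ variables, so $S/\mathfrak{p}$ is a polynomial ring and $e(S/\mathfrak{p})=1$. The task therefore collapses to computing the local length $\length_{S_{\mathfrak{p}}}((S/I^{(s)})_{\mathfrak{p}})$ for a single such prime.

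Next I would pin down this localization. The minimal primes of $I$ are pairwise incomparable, so in $I^{(s)}=\bigcap_{\mathfrak{q}}\mathfrak{q}^s$ every factor with $\mathfrak{q}\neq\mathfrak{p}$ becomes the unit ideal after localizing at the minimal prime $\mathfrak{p}$; hence $(I^{(s)})_{\mathfrak{p}}=\mathfrak{p}^s S_{\mathfrak{p}}$ and $\length_{S_{\mathfrak{p}}}((S/I^{(s)})_{\mathfrak{p}})=\length_{S_{\mathfrak{p}}}((S/\mathfrak{p}^s)_{\mathfrak{p}})$. Applying the associativity formula once more, now to $S/\mathfrak{p}^s$ whose only minimal prime is $\mathfrak{p}$, gives $e(S/\mathfrak{p}^s)=\length_{S_{\mathfrak{p}}}((S/\mathfrak{p}^s)_{\mathfrak{p}})\cdot e(S/\mathfrak{p})=\length_{S_{\mathfrak{p}}}((S/\mathfrak{p}^s)_{\mathfrak{p}})$. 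Since $\mathfrak{p}$ is generated by $h$ linear forms, Lemma \ref{multLem} evaluates this as $e(S/\mathfrak{p}^s)=\binom{s+h-1}{h}$. Substituting back, every top-dimensional prime contributes $\binom{s+h-1}{h}$, and summing over the $|\minh(I)|$ of them yields the asserted formula.

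The step I expect to require the most care is the dimension bookkeeping underpinning the first display: namely that the components $\mathfrak{q}^s$ with $\hgt(\mathfrak{q})>h$ strictly lower the dimension and so contribute nothing to the multiplicity, so that only the primes in $\minh(I)$ survive. This, together with the identification $(I^{(s)})_{\mathfrak{p}}=\mathfrak{p}^s S_{\mathfrak{p}}$, which hinges on the incomparability of the minimal primes of a squarefree monomial ideal, is the heart of the argument; the remaining length computation is routine once it has been reduced to Lemma \ref{multLem}.
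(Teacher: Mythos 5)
Your argument is correct and follows the same route the paper intends: the associativity formula \cite[Corollary 4.7.8]{Bh1993} reduces the multiplicity to a sum over the height-$h$ associated primes, the localization $(I^{(s)})_{\mathfrak{p}}=\mathfrak{p}^sS_{\mathfrak{p}}$ identifies each local length with $e(S/\mathfrak{p}^s)$, and Lemma \ref{multLem} supplies the value $\binom{s+h-1}{h}$. You have merely written out in full the details that the paper leaves implicit in stating this as an observation.
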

As a consequence of Observation \ref{rem-multi}, we obtain the multiplicity of symbolic powers of 
edge ideals and cover ideals in terms of combinatorial invariants.
\begin{corollary}\label{multi-coverideal}
Let $G$ be a graph and $h$ be the size of the smallest vertex cover of $G$. Then for all $s \geq 1$,
\begin{enumerate}[\rm i)]
 \item $e\left(\dfrac{S}{I(G)^{(s)}}\right)=\binom{h+s-1}{h} \mathcal{V}(G)$, where
 $\mathcal{V}(G)$ is the number of minimal vertex covers of $G$ of minimal size.
 \item $e\left(\dfrac{S}{J(G)^{(s)}}\right)=\binom{s+1}{2}|E(G)|.$ 
\end{enumerate}	
\end{corollary}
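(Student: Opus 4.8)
The plan is to derive both statements directly from Observation \ref{rem-multi}, so the entire task reduces to identifying, in each case, the minimal primes of minimal height together with their common height $h = \hgt(I)$.

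For part (i) I would start from the standard primary decomposition of the edge ideal, $I(G) = \bigcap_C \mathfrak{p}_C$, where $C$ runs over the minimal vertex covers of $G$ and $\mathfrak{p}_C = (x_i : x_i \in C)$. Since each $\mathfrak{p}_C$ is generated by a subset of the variables, $\hgt(\mathfrak{p}_C) = |C|$; consequently $\hgt(I(G)) = h$ equals the size of a smallest vertex cover, and $\minh(I(G))$ consists precisely of those $\mathfrak{p}_C$ with $|C| = h$. Hence $|\minh(I(G))| = \mathcal{V}(G)$, and substituting into the formula of Observation \ref{rem-multi} gives $e(S/I(G)^{(s)}) = \binom{h+s-1}{h}\,\mathcal{V}(G)$, as desired.

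For part (ii) I would invoke the dual description $J(G) = \bigcap_{e \in E(G)} \mathfrak{p}_e$, where $\mathfrak{p}_e = (x_i, x_j)$ for the edge $e = \{x_i, x_j\}$; this is the decomposition already in force in the proof of Lemma \ref{symbolicGen} and reflects the duality between $I(G)$ and $J(G)$ recorded in the introduction. Every $\mathfrak{p}_e$ has height exactly $2$, so $\hgt(J(G)) = 2$ and all minimal primes automatically belong to $\minh(J(G))$, whence $|\minh(J(G))| = |E(G)|$. Applying Observation \ref{rem-multi} with $h = 2$ then yields $e(S/J(G)^{(s)}) = \binom{s+1}{2}|E(G)|$.

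There is no real obstacle here beyond bookkeeping: the one point demanding care is to read the symbol $h$ in Observation \ref{rem-multi} correctly as the common height of the height-minimal primes, and to match it with the right quantity in each part (the smallest vertex cover size in (i), and $2$ in (ii)). I would also flag the tacit hypothesis $E(G) \neq \emptyset$ in (ii), needed so that $J(G)$ is a proper ideal and its minimal primes genuinely have height $2$.
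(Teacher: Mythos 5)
Your proposal is correct and follows exactly the route the paper intends: the corollary is stated as an immediate consequence of Observation \ref{rem-multi}, obtained by identifying the minimal primes of $I(G)$ with minimal vertex covers (so that $\minh(I(G))$ counts the minimum-size covers) and those of $J(G)$ with the edge primes $\mathfrak{p}_e$ of height $2$. Your added remark about requiring $E(G)\neq\emptyset$ in part (ii) is a reasonable point of care that the paper leaves tacit.
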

\vskip 5mm \noindent
\textbf{Acknowledgement:} 
We would like to thank A. V. Jayanthan and J. K. Verma
for clarifications on several doubts.
%for his encouragement and insightful conversations.
%We also would like to thank  for some useful suggestions.
The computational commutative algebra package Macaulay 2 \cite{M2} was heavily used to compute several
examples. 
The first named author is partially supported by NBHM, India.
The third named author is partially supported by UGC, India.
The last named author is partially supported by 
the Institute of Mathematical Sciences, Chennai and National Postdoctoral Fellowship (PDF/2019/002800) by Sciences and Engineering Research Board,  India.
We also thank the referee for carefully reading the manuscript and making
several suggestions that improved the exposition.

\bibliographystyle{abbrv}  %% or 
\bibliography{refs_reg}
\end{document}